\documentclass{amsart}

\usepackage{paralist}
\usepackage[]{hyperref}
\usepackage{color}
\usepackage{pgf,tikz, tikzscale}
\usetikzlibrary{decorations.pathreplacing,decorations.markings}
\usetikzlibrary{arrows, knots, external, positioning}

\usetikzlibrary{arrows.meta}
\tikzstyle arrowstyle=[scale=2]
\tikzstyle mdirected=[postaction={decorate,decoration={markings,
    mark=at position .5 with {\arrow[arrowstyle]{stealth}}}}]
\tikzstyle edirected=[postaction={decorate,decoration={markings,
    mark=at position 1.0 with {\arrow[arrowstyle]{stealth}}}}]

\usetikzlibrary{circuits.logic.US,circuits.logic.IEC,fit}

\usepackage{tabularx}
\usepackage{array}
\newcolumntype{C}[1]{>{\centering\arraybackslash}m{#1}}

\newtheorem{thm}{Theorem}[section]

\newtheorem{cor}[thm]{Corollary}
\newtheorem{prop}[thm]{Proposition}

\theoremstyle{definition}
\newtheorem{defn}[thm]{Definition}

\numberwithin{equation}{section}

\newtheorem{theorem}{Theorem}[section]

\theoremstyle{definition}

\theoremstyle{remark}
\newtheorem{remark}[theorem]{Remark}
\newtheorem{example}[theorem]{Example}
\numberwithin{equation}{section}

\newcommand{\RN}[1]{%
  \textup{\uppercase\expandafter{\romannumeral#1}}%
}


\begin{document}

\title[Symplectic fillings of quotient singularities as Lefschetz fibrations]{A Lefschetz fibration on minimal symplectic fillings of a quotient surface singularity}

\author{Hakho Choi}
\address{Department of Mathematical Sciences, Seoul National University, Seoul 08826, Korea }
\email{hako85@snu.ac.kr}

\author{Jongil Park}
\address{Department of Mathematical Sciences, Seoul National University, Seoul 08826, Korea \& Korea Institute for Advanced Study, Seoul 02455, Korea}
\email{jipark@snu.ac.kr}

\thanks{}
\subjclass[2010]{57R17, 53D05, 14E15, 14J17}%
\keywords{Lefschetz fibration, quotient surface singularity, symplectic filling}
\date{February 8, 2018; revised October 31, 2018}

\begin{abstract} 
In this article, we construct a genus-$0$ or genus-$1$ positive allowable Lefschetz fibration on any minimal symplectic filling of the link of non-cyclic quotient surface singularities. As a byproduct, we also show that any minimal symplectic filling of the link of quotient surface singularities can be obtained from a sequence of rational blowdowns from its minimal resolution.
\end{abstract}

\maketitle
\hypersetup{linkcolor=black}

\section{introduction}

Ever since S. Donaldson~\cite{D99} showed that any closed symplectic 4-manifold admits a Lefschetz pencil and that a Lefschetz fibration can be obtained from a Lefschetz pencil by blowing-up the base loci, the study of Lefschetz fibrations has
become an important theme for topologically understanding symplectic 4-manifolds.  
In fact, Lefschetz pencils and Lefschetz fibrations have been studied extensively by algebraic geometers and topologists in the complex category, and these notions can be extended to the symplectic category.
It is also known that an isomorphism class of Lefschetz fibrations is characterized by the monodromy factorization, an ordered sequence of right-handed Dehn twists, up to Hurwitz equivalence and global conjugation equivalence.  

On the other hand, a main research topic in symplectic 4-manifold topology focuses on classifying symplectic fillings of certain 3-manifolds equipped with a contact structure. Among them, people have classified symplectic fillings of the link of a quotient surface singularity. Note that the link of a quotient surface singularity admits a natural contact structure, called the \emph{Milnor fillable contact structure}.  
For example, P. Lisca~\cite{L} classified symplectic fillings of cyclic quotient singularities whose corresponding link is a lens space, and M. Bhupal and K. Ono~\cite{BOn} listed all possible symplectic fillings of non-cyclic quotient surface singularities. Furthermore, the second author together with H. Park, D. Shin, and G. Urz\'{u}a~\cite{PPSU} constructed an explicit one-to-one correspondence between the minimal symplectic fillings and the Milnor fibers of non-cyclic quotient surface singularities. Note that the last result above implies that every minimal symplectic filling of a quotient surface singularity is in fact a Stein filling. 

Although the existence of a (positive allowable) Lefschetz fibration, called briefly PALF, on a Stein filling is well known in general~\cite{AOz, LoPi}, it is a somewhat different problem to find an explicit monodromy description for the Lefschetz fibration on a given Stein filling.  
In this article, we investigate the problem for minimal symplectic fillings of the link of quotient surface singularities. 
M. Bhupal and B. Ozbagci~\cite{BOz} found an algorithm to present each minimal symplectic filling of a cyclic quotient surface singularity as an explicit genus-$0$ positive allowable Lefschetz fibration over the disk. Furthermore, they also showed that such a PALF can be obtained topologically from the minimal resolution by monodromy substitutions corresponding to rational blowdowns.
The main goal of this article is to generalize their result for the non-cyclic quotient surface singularity cases. Thus, we obtain the following result.

\begin{thm}
Every minimal symplectic filling of the link of non-cyclic quotient surface singularities admits a genus-$0$ or genus-$1$ positive allowable Lefschetz fibration over the disk. Furthermore, each symplectic filling can be also obtained by rational blowdowns from the minimal resolution of its singularity.
\label{mainthm}
\end{thm}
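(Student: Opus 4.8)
The plan is to proceed by a combination of explicit combinatorial construction on the resolution graphs and a matching of monodromy factorizations. First I would recall from~\cite{PPSU} the complete list of minimal symplectic fillings of a non-cyclic quotient surface singularity: these are in one-to-one correspondence with the Milnor fibers, which in turn are encoded by certain ``$P$-resolution'' data attached to the dual resolution graph (a star-shaped graph with a central vertex and three or four arms of chains of rational curves). The starting point is the minimal resolution itself, whose plumbing description visibly carries a genus-$0$ Lefschetz fibration when the graph is cyclic; for the star-shaped case one instead gets a natural genus-$1$ (or, after handle cancellations, genus-$0$) Lefschetz fibration because a neighborhood of the central $(-b_0)$-curve together with the three/four arms fibers over the disk with a once-punctured torus (or planar) fiber. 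So the first step is to pin down this ``base'' PALF on the minimal resolution, keeping careful track of the vanishing cycles in terms of the arms.

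The second step is to handle the general minimal symplectic filling by induction along the partial order on fillings, where the minimal resolution is the maximum and the Milnor fiber of the singularity itself is (one of) the minimal elements. Using the description of~\cite{PPSU}, every minimal filling is obtained from the minimal resolution by a sequence of rational blowdowns along linear chains of spheres with self-intersections $(-b_1,\dots,-b_k)$ giving a lens space $L(n^2, n a -1)$ type configuration (Wahl chains / $p^2$-type for ordinary rational blowdowns, and more generally the chains appearing in the non-cyclic case). The key fact I would invoke is that each such rational blowdown corresponds, on the level of monodromy factorizations, to a \emph{monodromy substitution}: a subword of right-handed Dehn twists (the one reading off the linear subchain) is replaced by a shorter positive subword, exactly the lantern-relation-type substitutions used by Bhupal--Ozbagci~\cite{BOz} in the cyclic case and their generalizations. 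I would verify that (i) the subchain to be blown down always appears as a consecutive subword in the monodromy factorization inherited from the previous stage, so the substitution is legitimate, and (ii) the substituted word is again positive and allowable, so that the result is still a PALF, and (iii) the genus does not increase, staying at $0$ or $1$ — this last point needs the observation that rational blowdown is a subsurface operation that can only delete curves/decrease genus of the fiber, never raise it.

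The third step, which really contains the byproduct claim, is essentially bookkeeping: because each minimal filling in the list of~\cite{PPSU} is reached from the minimal resolution by such a chain of monodromy substitutions, and each monodromy substitution of this type realizes a rational blowdown at the 4-manifold level (this is the standard dictionary, e.g.\ as in~\cite{BOz}), it follows simultaneously that the filling carries the desired PALF and that it is obtained by rational blowdowns from the minimal resolution. I would organize the arms into the finitely many combinatorial types that occur for non-cyclic quotient singularities ($\mathbb{D}$-type, $\mathbb{T}$-type, $\mathbb{O}$-type, $\mathbb{I}$-type, in the usual $ADE$-flavored terminology for the finite subgroups of $U(2)$), and check the construction type by type.

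The hard part, I expect, will be the second step: ensuring that at every stage of the induction the chain to be rationally blown down actually sits inside the current fiber as a \emph{consecutive positive subword} of vanishing cycles, rather than merely as an abstract subconfiguration of curves in the 4-manifold. In the cyclic case this is manageable because the fiber is planar and the combinatorics of chains of Dehn twists in the disk is well understood; in the non-cyclic case the genus-$1$ fiber and the branching at the central vertex make the Hurwitz moves needed to bring the relevant twists into position considerably more delicate, and one has to argue that the extra arms not being blown down can be carried along without obstruction. A secondary technical point is confirming allowability (every vanishing cycle non-separating in the fiber, or at least not bounding in the filling) survives the substitutions, which I would handle by exhibiting each vanishing cycle explicitly as an arc-surrounding curve on the planar/once-punctured-torus fiber.
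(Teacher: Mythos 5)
Your overall outline matches the paper's strategy---build a genus-$0$ or genus-$1$ PALF on the minimal resolution, then realize each filling by monodromy substitutions that encode rational blowdowns---but there are two genuine gaps. First, you never verify that the open book induced on the boundary by your base PALF supports the \emph{Milnor fillable} contact structure $\xi_{\textrm{st}}$. Without this, the construction produces a Stein filling of the link with \emph{some} contact structure, and it does not connect to the classification of symplectic fillings of $(L,\xi_{\textrm{st}})$ at all. The paper spends a substantial part of Section~4 on exactly this point, invoking the Gay--Stipsicz uniqueness theorem for tight contact structures on small Seifert $L$-spaces and checking the adjunction equality vertex by vertex via rotation numbers of the vanishing cycles; the genus-$1$ fiber makes this a nontrivial computation (the degree $-2$ vertices off the maximal linear subgraph are homologically combinations such as $C_{\textrm{blue}}-C_{\textrm{red}}+2C_{\textrm{orange}}$, and one must check the rotation numbers conspire correctly). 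Your proposal omits this entirely.

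Second, the step you yourself flag as ``the hard part''---that the configuration to be blown down appears as a consecutive positive subword of the monodromy factorization---is precisely where the content of the proof lies, and you offer no mechanism for it. Note also a mild circularity: the statement that every minimal filling is reached from the minimal resolution of $X$ by rational blowdowns is the \emph{byproduct} being proven, not an input from~\cite{PPSU}; what is known a priori is only that each Milnor fiber is obtained by blowing down the class-$T$ configurations in the minimal resolution $Z$ of its $P$-resolution $Y$, which is generally a different $4$-manifold from the minimal resolution of $X$. The paper bridges this by splitting $P$-resolutions into two families: those whose class-$T$ singularities all lie in a maximal linear subgraph $\Gamma_L$ (reduced to the cyclic algorithm of Bhupal--Ozbagci, with a positive stabilization argument when the extra arm adds holes to the fiber), and those where they do not, which require an explicit finite list of subgraphs $\Gamma_1,\dots,\Gamma_7$ plus two exceptional genus-$1$ cases ($I_{30(2-2)+29}$ and $O_{12(2-2)+11}$) handled by Hurwitz moves, a braid relation, and a global conjugation to bring the relevant Dehn twists into a consecutive subword. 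Your proposal stops exactly where this case analysis begins, so as written it is an outline of the theorem's proof rather than a proof.
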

 
\begin{remark}
Note that a genus of the  PALF in Theorem~\ref{mainthm} above is determined only by the existence of a \emph{bad vertex} (refer to Section 2.1 for a definition) in the minimal resolution graph of the corresponding singularity. Explicitly, a genus of the PALF is $0$ if the minimal resolution graph of a quotient surface singularity has no bad vertex, and a genus is $1$ otherwise.  
\end{remark}
 
 In order to prove Theorem~\ref{mainthm} above, we first construct a PALF on the minimal resolution graph of a non-cyclic quotient surface singularity: If there is no bad vertex in the minimal resolution graph, we follow the idea of D. Gay and T. Mark in~\cite{GaM}, where they initially constructed a genus-$0$ PALF on the minimal resolution graph. If there is a bad vertex, then we construct a genus-$1$ PALF, which is a special case of open book decompositions on the boundary of plumbings obtained by J. Etnyre and B. Ozbagci~\cite{EtOz1}. Next, we show that the induced contact structure on the boundary is the Milnor fillable contact structure, which can be obtained by computing the first Chern class in terms of vanishing cycles and the rotation number of these vanishing cycles.
Then, we construct a PALF on any minimal symplectic filling via the corresponding $P$-resolution. Since every Milnor fiber, hence every minimal symplectic filling, of a quotient surface singularity can be obtained topologically by rationally blowing down the corresponding $P$-resolution, it is sufficient to construct a PALF on the general fiber of $P$-resolutions. 
Finally we show that a Lefschetz fibration of any minimal symplectic filling can be obtained by monodromy substitutions from the minimal resolution of the corresponding singularity by adapting the same technique  that H.~Endo, T.~Mark and J.~Van Horn-Morris used in~\cite{EnMV}.
\\
  
This article is organized as follows: We briefly review some generalities on quotient surface singularities, including minimal resolutions and $P$-resolutions, and the relation between monodormy substitutions and rational blowdowns in Section 2. We introduce Lisca's classification result on symplectic fillings and Bhupal-Ozbagci's algorithm for finding a PALF on the cyclic cases in Section 3. We subsequently explain how to construct a genus-$0$ or genus-$1$ Lefschetz fibration on the minimal resolutions and we show that the induced contact structure on the boundary is indeed Milnor fillable in Section 4. Finally, we provide an explicit algorithm for a PALF on any minimal symplectic filling by investigating a PALF on each $P$-resolution in Section 5.


\subsection*{Acknowledgements}

Jongil Park is supported by Samsung Science and Technology Foundation under Project Number SSTF-BA1602-02. 
He also holds a joint appointment at KIAS and in the Research Institute of Mathematics, SNU.




\section{Generalities on quotient surface singularities}
\label{Section-2}

In this section we briefly recall some basics on quotient surface singularities (refer to~\cite{PPSU} for details). 
Let $(X,0)=(\mathbb{C}^2/G, 0)$ be a germ of a quotient surface singularity, where $G$ is a finite subgroup of $GL(2, \mathbb{C})$ without reflections. Since $(\mathbb{C}^2/G_1, 0)$ is analytically isomorphic to $(\mathbb{C}^2/G_2, 0)$ if and only if $G_1$ is conjugate to $G_2$, it is enough to classify finite subgroups of $GL(2,\mathbb{C})$ without reflections up to conjugation when classifying quotient surface singularities $(\mathbb{C}^2/G,0)$. We may assume that $G \subset U(2)$ because $G$ is finite. The action of $G$ on $\mathbb{C}^2$ then lifts to an action on the blow-up of $\mathbb{C}^2$ at the origin. Thus, $G$ acts on the exceptional divisor $E \cong \mathbb{CP}^1$, where the action is induced by 
$G \subset U(2) \to PU(2) \cong SO(3)$. The image of $G$ in $SO(3)$ is either a (finite) cyclic subgroup, a dihedral group, tetrahedral group, octahedral group, or icosahedral group. Therefore quotient surface singularities are divided into five classes: \emph{cyclic} quotient surface singularities, dihedral singularities, tetrahedral singularities, octahedral singularities, and icosahedral singularities. We call the last four cases \emph{non-cyclic} quotient surface singularities.

\subsection{Symplectic fillings and Milnor fibers}

Let $(X,0)=(\mathbb{C}^2/G,0)$ be a germ of a quotient surface singularity, where $G$ is a finite subgroup of $U(2)$ without reflections. Assume that $(X, 0) \subset (\mathbb{C}^N, 0)$, which is always possible for a normal surface singularity. If $B \subset \mathbb{C}^N$ is a small ball centered at the origin, then a small neighborhood $X \cap B$ of the singularity is homeomorphic to the cone over its boundary $L := X \cap \partial B$. The smooth compact 3-manifold $L$ is called the \emph{link} of the singularity. It is well known that the topology of the germ $(X, 0)$ is completely determined by its link $L$ and the link $L$ admits a natural contact structure $\xi_{\textrm{st}}$, so-called \emph{Milnor fillable contact structure} $\xi_{\textrm{st}}=TL \cap JTL$, where $J$ is an induced complex structure along $L$. 
A \emph{(strong) symplectic filling} of $(X, 0)$ is a symplectic 4-manifold $(W, \omega)$, where the boundary $\partial W=L$ satisfies the compatibility condition $\omega=d\alpha_{\textrm{st}}$ near $L$, 
and where $\alpha_{\textrm{st}}$ is a 1-form defining the contact structure 
$\xi_{\textrm{st}}=\ker{\alpha_{\textrm{st}}}$ on $L$.
One may also define a so-called \emph{weak} symplectic filling. However, it is known that two notions of symplectic fillings coincide in our case because the link $L$ is a rational homology sphere. So we simply call them \emph{symplectic fillings}. 

Next, we call $W$ a \emph{Stein filling} of $(X,0)$ if it is a Stein manifold $W$ with $L$ as its strictly pseudoconvex boundary and $\xi_{\textrm{st}}$ is the set of complex tangencies to $L$. It is clear that Stein fillings are minimal symplectic fillings of the link $L$ of $(X,0)$.

Third, we call a proper flat map $\pi \colon \mathcal{X} \rightarrow \Delta$ with $\Delta=\{ t \in \mathbb{C} : |t| < \epsilon \}$ a \emph{smoothing} of $(X,0)$ if it satisfies $\pi^{-1}(0) = X$ and $\pi^{-1}(t)$ is smooth for all $t \neq 0$. The \emph{Milnor fiber} $M$ of a smoothing $\pi$ of $(X,0)$ is defined as a general fiber $\pi^{-1}(t)$ ($0 < t < \epsilon$). It is known that the Milnor fiber $M$ is a compact 4-manifold with link $L$ as its boundary and the diffeomorphism type depends only on the smoothing $\pi$. 
Furthermore, $M$ has a natural Stein (hence symplectic) structure, thus it provides an example of a Stein (and minimal symplectic) filling of $(L,\xi_{st})$.
Recall that, as mentioned in the Introduction, H. Park, J. Park, D. Shin, and G. Urz\'{u}a~\cite{PPSU} constructed an explicit one-to-one correspondence between the minimal symplectic fillings and the Milnor fibers of quotient surface singularities. Hence, it is now a well-known fact that every minimal symplectic filling of a quotient surface singularity is a Stein filling and a Milnor fiber of the singularity.

\subsection{Minimal resolutions}

We first denote the \emph{Hirzebruch-Jung continued fraction} by $[c_1,\dots,c_t] (c_i \geq 1)$, which is defined recursively as follows:
$$ [c_t] = c_t, \ \textrm{and}\ \ [c_i,c_{i+1},\dots,c_t]=c_i-\frac{1}{[c_{i+1},\dots,c_t]}.$$
Since a continued fraction $[c_1,c_2,\dots,c_t]$ often describes a chain of smooth rational curves on a complex surface whose dual graph is given by
\begin{center}
\begin{tikzpicture}
\filldraw (0,0) circle (2pt);
\filldraw (1,0) circle (2pt);
\filldraw (3,0) circle (2pt);
\filldraw (4,0) circle (2pt);
\draw (2,0) node {$\cdots$};
\draw (0,0) node[above] {$-c_1$};
\draw (1,0) node[above] {$-c_2$};
\draw (3,0) node[above] {$-c_{t-1}$};
\draw (4,0) node[above] {$-c_t$};
\draw (0,0)--(1,0);
\draw (3,0)--(4,0);
\draw (1,0)--(1.5,0) (2.5,0)--(3,0);
\end{tikzpicture}
\hspace{-.5 em},
\end{center}
we use by analogy the term `blowing up' for the following operations and the term `blowing down' for their inverses: 
\begin{eqnarray*}
\lbrack c_1, \dots, c_{i-1}, c_{i+1}, \dots, c_t]&\rightarrow&[c_1, \dots, c_{i-1}+1, 1, c_{i+1}+1, \dots, c_t\rbrack\\
\lbrack c_1, \dots, c_{t-1}  \rbrack&\rightarrow&\lbrack c_1, \dots, c_{t-1}+1, 1  \rbrack
.
\end{eqnarray*}

Now we describe the (dual graph of) the minimal resolution of quotient surface singularities. In the resolution graph, note that a vertex $v$ corresponds to the irreducible component $E_v$ of the exceptional divisor $E$, and the edges correspond to the intersections of the irreducible components $E_v$. We call the number of edges connected to the vertex v the \emph{valence} of v and the self-intersection of $E_v$ the \emph{degree} of v. If the absolute value of the degree of $v$ is strictly less than the valence of $v$, we call the vertex $v$ a \emph{bad vertex}.

\begin{example}
The following figures show the cases of minimal resolution graphs with and without a bad vertex. A central vertex (vertex with valence $3$) in the right-handed figure is a bad vertex.
\end{example}

\begin{center}
\begin{tikzpicture}
\begin{scope}
\filldraw (0,0) circle (2pt);
\filldraw (1,0) circle (2pt);
\filldraw (2,0) circle (2pt);
\filldraw (1,1) circle (2pt);
\draw (0,0) node[below] {$-2$};
\draw (1,0) node[below] {$-5$};
\draw (2,0) node[below] {$-3$};
\draw (1,1) node[right] {$-2$};
\draw (0,0)--(2,0);
\draw (1,0)--(1,1);
\draw (1,-1) node {(a) No bad vertex case};
\end{scope}
\begin{scope}[shift={(5,0)}]
\filldraw (0,0) circle (2pt);
\filldraw (1,0) circle (2pt);
\filldraw (2,0) circle (2pt);
\filldraw (1,1) circle (2pt);

\draw (0,0) node[below] {$-2$};
\draw (1,0) node[below] {$-2$};
\draw (2,0) node[below] {$-3$};
\draw (1,1) node[right] {$-2$};
\draw (0,0)--(2,0);
\draw (1,0)--(1,1);
\draw (1,-1) node {(b) Bad vertex case};
\end{scope}
\end{tikzpicture}
\end{center}

\subsection*{Cyclic singularities $A_{n,q}$.}
$\phantom{0}$
A \emph{cyclic quotient surface singularity $(X,0)$ of type $\frac{1}{n}(1,q)$} with $1 \le q < n$ and $(n,q)=1$ is a quotient surface singularity, where a cyclic group $\mathbb{Z}_n$ acts by
%
$\zeta \cdot (x,y) = (\zeta x, \zeta^q y).$
%
Then, the minimal resolution graph of $(X,0)$ is given by
\begin{center}
\begin{tikzpicture}
\filldraw (0,0) circle (2pt);
\filldraw (1,0) circle (2pt);
\filldraw (3,0) circle (2pt);
\filldraw (4,0) circle (2pt);
\draw (2,0) node {$\cdots$};
\draw (0,0) node[above] {$-b_1$};
\draw (1,0) node[above] {$-b_2$};
\draw (3,0) node[above] {$-b_{r-1}$};
\draw (4,0) node[above] {$-b_r$};
\draw (0,0)--(1,0);
\draw (3,0)--(4,0);
\draw (1,0)--(1.5,0) (2.5,0)--(3,0);
\end{tikzpicture}
\hspace{-.5 em},
\end{center}
where $\displaystyle \frac{n}{q}=[b_1, b_2, \dots, b_{r-1}, b_r]$
with $b_i\geq 2$ for all $i$.

\subsection*{Dihedral singularities $D_{n,q}$.}
$\phantom{0}$
Let $(X,0)$ be a dihedral singularity of type $D_{n,q}$, where $1 < q < n$ and $(n,q)=1$. The minimal resolution graph of $(X,0)$ is given by
\begin{center}
\begin{tikzpicture}
\filldraw (-1,0) circle (2pt);
\filldraw (0,1) circle (2pt);

\filldraw (0,0) circle (2pt);
\filldraw (1,0) circle (2pt);
\filldraw (3,0) circle (2pt);
\filldraw (4,0) circle (2pt);
\draw (2,0) node {$\cdots$};
\draw (0,0) node[below] {$-b$};
\draw (1,0) node[below] {$-b_1$};
\draw (3,0) node[below] {$-b_{r-1}$};
\draw (4,0) node[below] {$-b_r$};
\draw (-1,0) node[below] {$-2$};
\draw (0,1) node[right] {$-2$};

\draw (0,0)--(-1,0) (0,0)--(0,1);
\draw (0,0)--(1,0);
\draw (3,0)--(4,0);
\draw (1,0)--(1.5,0) (2.5,0)--(3,0);
\end{tikzpicture}
\hspace{-.5 em},
\end{center}
where $\displaystyle \frac{n}{q}=[b, b_1, \dots, b_{r-1}, b_r]$ with $b \ge 2$ and $b_i \ge 2$ for all $i$.

\subsection*{Other cases.}
For a tetrahedral, octahedral, or icosahedral singularity, the minimal resolution has a central curve $C_0$ with $C_0 \cdot C_0 = -b$ ($b \ge 2$) and three arms, which can be divided into \emph{type $(3,1)$} and \emph{type $(3,2)$}:
\begin{center}
\begin{tikzpicture}
\begin{scope}
\filldraw (-1,0) circle (2pt);
\filldraw (0,1) circle (2pt);

\filldraw (0,0) circle (2pt);
\filldraw (1,0) circle (2pt);
\filldraw (3,0) circle (2pt);
\draw (2,0) node {$\cdots$};
\draw (0,0) node[below] {$-b$};
\draw (1,0) node[below] {$-b_1$};
\draw (3,0) node[below] {$-b_{r}$};
\draw (-1,0) node[below] {$-2$};
\draw (0,1) node[right] {$-3$};

\draw (0,0)--(-1,0) (0,0)--(0,1);
\draw (0,0)--(1,0);
\draw (1,0)--(1.5,0) (2.5,0)--(3,0);
\draw (1, -1) node {(a) type $(3,1)$};
\end{scope}
\begin{scope}[shift={(6.5,0)}]
\filldraw (-1,0) circle (2pt);
\filldraw (-2,0) circle (2pt);

\filldraw (0,1) circle (2pt);
\draw (0.5, -1) node {(b) type $(3,2)$};

\filldraw (0,0) circle (2pt);
\filldraw (1,0) circle (2pt);
\filldraw (3,0) circle (2pt);
\draw (2,0) node {$\cdots$};
\draw (0,0) node[below] {$-b$};
\draw (1,0) node[below] {$-b_1$};
\draw (3,0) node[below] {$-b_{r}$};
\draw (-1,0) node[below] {$-2$};
\draw (-2,0) node[below] {$-2$};

\draw (0,1) node[right] {$-2$};

\draw (0,0)--(-2,0) (0,0)--(0,1);
\draw (0,0)--(1,0);
\draw (1,0)--(1.5,0) (2.5,0)--(3,0);
\end{scope}
\end{tikzpicture}
\end{center}

\subsection{$P$-resolutions}

 
\begin{defn}
A normal surface singularity is \emph{of class $T$} if it is a rational double point singularity or a cyclic quotient surface singularity of type $\frac{1}{dn^2}(1, dna-1)$ with $d \ge 1$, $n \ge 2$, $1 \le a < n$, and $(n,a)=1$. Equivalently, it is a quotient surface singularity which admits a $\mathbb{Q}$-Gorenstein one-parameter smoothing.~\cite{KSB}
\end{defn}

Note that one-parameter $\mathbb{Q}$-Gorenstein smoothing of a singularity of class $T$ is interpreted topologically as a rational blowdown surgery defined by R.~Fintushel and R.~Stern~\cite{FS97}, and later extended by J.~Park~\cite{P}.
Furthermore, thanks to J. Wahl~\cite{W}, a cyclic quotient surface singularity of class $T$ can be recognized from its minimal resolution as follows:

\begin{prop}
\begin{enumerate}
\item The singularities  
\begin{tikzpicture} 
\filldraw (0,0) circle (2pt); 
\draw (0,0) node[above] {$-4$}; 
\end{tikzpicture} and 
\begin{tikzpicture} 
\filldraw (0,0) circle (2pt); 
\filldraw (1,0) circle (2pt); 

\filldraw (3,0) circle (2pt); 
\filldraw (4,0) circle (2pt); 

\draw (0,0)--(1.5,0) (2.5,0)--(4,0);
\draw (2,0) node {$\dots$};
\draw (0,0) node[above] {$-3$}; 
\draw (1,0) node[above] {$-2$}; 
\draw (3,0) node[above] {$-2$}; 
\draw (4,0) node[above] {$-3$}; 

\end{tikzpicture}
are of class $T$.

\item If 
\begin{tikzpicture} 
\filldraw (0,0) circle (2pt); 
\filldraw (1,0) circle (2pt); 

\filldraw (3,0) circle (2pt); 
\filldraw (4,0) circle (2pt); 

\draw (0,0)--(1.5,0) (2.5,0)--(4,0);
\draw (2,0) node {$\dots$};
\draw (0,0) node[above] {$-b_1$}; 
\draw (1,0) node[above] {$-b_2$}; 
\draw (3,0) node[above] {$-b_{r-1}$}; 
\draw (4,0) node[above] {$-b_r$};

\end{tikzpicture}
is of class $T$, so are $$\begin{tikzpicture}[scale=1.3] 
\filldraw (0,0) circle (2pt); 
\filldraw (1,0) circle (2pt); 

\filldraw (3,0) circle (2pt); 
\filldraw (4,0) circle (2pt); 

\draw (0,0)--(1.5,0) (2.5,0)--(4,0);
\draw (2,0) node {$\dots$};
\draw (0,0) node[above] {$-2$}; 
\draw (1,0) node[above] {$-b_1$}; 
\draw (3,0) node[above] {$-b_{r-1}$}; 
\draw (4,0) node[above] {$-(b_r+1)$};

\end{tikzpicture}$$
and
$$\begin{tikzpicture} [scale=1.3]
\filldraw (0,0) circle (2pt); 
\filldraw (1,0) circle (2pt); 

\filldraw (3,0) circle (2pt); 
\filldraw (4,0) circle (2pt); 

\draw (0,0)--(1.5,0) (2.5,0)--(4,0);
\draw (2,0) node {$\dots$};
\draw (0,0) node[above] {$-(b_1+1)$}; 
\draw (1,0) node[above] {$-b_2$}; 
\draw (3,0) node[above] {$-b_{r}$}; 
\draw (4,0) node[above] {$-2$};

\end{tikzpicture}$$
\item Every singularity of class $T$ that is not a rational double point can be obtained directly from one of the singularities described in (1) and by iterating through the steps described in (2) above.
\end{enumerate}
\end{prop}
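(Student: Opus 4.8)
My plan is to work entirely with the arithmetic description of class $T$. Write $T(d,n,a)$ for the cyclic quotient singularity $\frac{1}{dn^2}(1,dna-1)$ with $d\ge1$, $n\ge2$, $1\le a<n$, $(n,a)=1$; thus ``of class $T$ but not a rational double point'' means ``isomorphic to some $T(d,n,a)$''. I will use the standard dictionary between Hirzebruch--Jung fractions and products of $M(b)=\bigl(\begin{smallmatrix}b&-1\\ 1&0\end{smallmatrix}\bigr)$: if $\tfrac Nq=[b_1,\dots,b_r]$ then $M(b_1)\cdots M(b_r)=\bigl(\begin{smallmatrix}N&-N'\\ q&-q'\end{smallmatrix}\bigr)$ with $\tfrac{N'}{q'}=[b_1,\dots,b_{r-1}]$. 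Since $\det M(b)=1$ we get $N'q-Nq'=1$, so $N'\equiv q^{-1}\pmod N$ and hence, because $0<N'<N$, the pair $(N',q')$ is already a function of $(N,q)$. This observation is the one point on which everything hinges. For Part (1) I would just expand continued fractions: $\tfrac{4d}{2d-1}=[\,3,\underbrace{2,\dots,2}_{d-2},3\,]$ for $d\ge2$ and $\tfrac41=[4]$, so the graph $[\,3,\underbrace{2,\dots,2}_{k},3\,]$ is the minimal resolution of $T(k+2,2,1)$ and $[4]$ that of $T(1,2,1)$, both of class $T$.

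For Part (2) I would first reduce to a single operation. Reversing a resolution chain replaces $\tfrac1N(1,q)$ by $\tfrac1N(1,q^*)$ with $qq^*\equiv1\pmod N$, and a short computation (checking $(dna-1)(dn(n-a)-1)\equiv1\pmod{dn^2}$) shows this turns $T(d,n,a)$ into $T(d,n,n-a)$; in particular reversal preserves class $T$. Since $[b_1,\dots,b_r]\mapsto[2,b_1,\dots,b_{r-1},b_r+1]$ is conjugate by reversal to $[b_1,\dots,b_r]\mapsto[b_1+1,b_2,\dots,b_r,2]$, it is enough to treat the latter. Using $M(b_1+1)=M(b_1)+\bigl(\begin{smallmatrix}1&0\\ 0&0\end{smallmatrix}\bigr)$ together with the matrix identity above, a one-line computation gives
\[
[\,b_1+1,b_2,\dots,b_r,2\,]=\frac{2N+2q-N'-q'}{2q-q'},
\]
and now — this is where the observation from the first paragraph is used — I may substitute the explicit values of $N'=q^*$ and $q'$ along with $N=dn^2$, $q=dna-1$. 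I expect everything to collapse to
\[
[\,b_1+1,b_2,\dots,b_r,2\,]=T(d,\,n+a,\,a),\qquad [\,2,b_1,\dots,b_{r-1},b_r+1\,]=T(d,\,2n-a,\,n),
\]
the second by reversal. Thus both operations fix $d$, keep the parameters admissible (the $\gcd$ and inequalities are immediate), and — being invertible integral substitutions in $(n,a)$ — they reproduce the class-$T$ form in an invertible way; so the converse also holds: if a chain beginning (resp.\ ending) in $2$ is of class $T$, then so is the strictly shorter chain obtained by deleting that $2$ and lowering the opposite end by $1$.

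For Part (3) I would induct on the parameter $n$ of $T(d,n,a)$. If $n=2$ then $a=1$ and we are at one of the graphs of (1). If $n\ge3$, look at the resolution chain $[c_1,\dots,c_r]=\tfrac{dn^2}{dna-1}$: one reads off $c_1=2\iff 2q>N\iff 2a>n$, and applying this to the reversed chain $T(d,n,n-a)$ gives $c_r=2\iff 2a<n$. Since $(n,a)=1$ and $n\ge3$ force $2a\ne n$, exactly one end equals $2$. Say $c_1=2$ (the case $c_r=2$ is symmetric); then $[c_1,\dots,c_r]$ is the image under $[\,\cdot\,]\mapsto[2,\cdot,+1]$ of the chain $[c_2,\dots,c_{r-1},c_r-1]$, which by the converse in (2) is again of class $T$ and has parameter $a<n$. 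By induction that shorter chain is obtained from (1) by iterating (2), and running the last step forward finishes the proof.

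The hard part is really just Part (2). Before one notices that $N'$ — and therefore $q'$ — is determined by $N$ and $q$, it looks as though the two operations do not act on the triple $(d,n,a)$ alone and the bookkeeping is hopeless; once that is in place, the two transformation rules $T(d,n,a)\mapsto T(d,n+a,a)$ and $T(d,n,a)\mapsto T(d,2n-a,n)$ — and with them both (2) and its converse — are a routine substitution, and (1) and (3) are elementary. So I would put essentially all the care into verifying those two rules and the admissibility and invertibility statements that make the induction in (3) run.
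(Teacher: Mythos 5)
The paper itself offers no proof of this proposition: it is quoted from Wahl~\cite{W} (the statement is essentially Prop.~3.10--3.11 of K\'ollar--Shepherd-Barron~\cite{KSB}), so there is no in-paper argument to compare yours against. Your arithmetic proof is correct, and it is essentially the standard one. I verified the computations on which it rests: with $N=dn^2$, $q=dna-1$ the relation $N'q-Nq'=1$ forces $N'=dn(n-a)-1$ and $q'=da(n-a)-1$, whence $2N+2q-N'-q'=d(n+a)^2$ and $2q-q'=da(n+a)-1$, confirming $[\,b_1+1,b_2,\dots,b_r,2\,]=T(d,n+a,a)$; combined with your (correct) identification of chain reversal with $a\mapsto n-a$, this gives $[\,2,b_1,\dots,b_{r-1},b_r+1\,]=T(d,2n-a,n)$, and the dichotomy $c_1=2\iff 2a>n$, $c_r=2\iff 2a<n$ (valid since $dn>2$ once $n\ge 3$) makes the induction in (3) run. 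Two points deserve to be made explicit rather than left implicit: the ``converse'' step in (2)--(3) relies on the uniqueness of the continued-fraction expansion with all entries $\ge 2$, which is what guarantees that a class-$T$ chain ending in $2$ is \emph{literally} the image of the shorter chain under your operation; and item (2) must be read as applying to the non-rational-double-point class-$T$ chains $T(d,n,a)$ (the operations applied to an $A_k$-chain $[2,\dots,2]$ do not produce class-$T$ chains), which is the intended reading and the one you adopt. With those remarks added, the argument is complete.
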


\begin{defn}
A \emph{$P$-resolution} $f: (Y,E)\rightarrow (X,0)$ of a quotient surface singularity $(X,0)$ is a partial resolution such that $Y$ has at most rational double points or singularities of class $T$ and $K_Y$ is ample relative to $f$.
\end{defn}

 We usually describe a $P$-resolution $Y\rightarrow X$ as the minimal resolution $\pi:Z\rightarrow Y$ of $Y$ with $\pi$-exceptional divisors. Note that the ampleness condition in the definition of a $P$-resolution can be checked on $Z$: Every $(-1)$ curve on $Z$ must intersect two curves $E_1$ and $E_2$, which are exceptional for singularities of class $T$ on $Y$. In addition, the sum of the $k_i$ coefficients of $E_i$ in the canonical divisor $K_{Z}$ is less than $-1$. 
 According to J. Kollar and N. Shepherd-Barron~\cite{KSB}, there is a one-to-one correspondence between the set of all irreducible components of the versal deformation space of a quotient surface singularity $(X,0)$ and the set of all $P$-resolutions of $(X,0)$. 
Hence, since the Milnor fibers are invariants of the irreducible components of the versal deformation space of $(X,0)$, there is a one-to-one correspondence between the Milnor fibers and the $P$-resolutions. Furthermore, J. Stevens~\cite{Ste2} also showed how to find all $P$-resolutions of quotient surface singularities.

 \begin{example}
 Let $(X,0)$ be a dihedral singularity of type $D_{9,2}$. Since $9/2=[5,2]$, the minimal resolution of $(X,0)$ is given by
 $$
  \begin{tikzpicture}[scale=1.3]
 \filldraw (0,0) circle (2pt);
 \filldraw (1,0) circle (2pt);
 \filldraw (2,0) circle (2pt);
 \filldraw (1,1) circle (2pt); 
\draw (0,0)--(2,0) (1,0)--(1,1);

\draw (0,-0.15) node[below] {$-2$};
\draw (1,1) node[right] {$-2$};
\draw (1,-0.15) node[below] {$-5$};
\draw (2,-0.15) node[below] {$-2$};
\end{tikzpicture}
 $$
We have the following four $P$-resolutions of $(X,0)$:
Here, a linear chain of vertices decorated by a rectangle $\square$ denotes curves on the minimal resolution of a $P$-resolution, which are contracted to a singularity of class $T$ on the $P$-resolution.
Note that there are certain symmetries in the list of $P$-resolutions. 
$$
\begin{tikzpicture}[scale=1.3]
\begin{scope}
\draw (0,0)--(2,0) (1,0)--(1,1);
\node [draw, fill=white, shape=rectangle, anchor=center] at (0,0) {};
\node [draw, fill=white, shape=rectangle, anchor=center] at (1,1) {};
\node [draw, fill=white, shape=rectangle, anchor=center] at (2,0) {};
 \filldraw (1,0) circle (2pt);

\draw (1,1) node[right] {$-2$};
\draw (0,-0.15) node[below] {$-2$};
\draw (1,-0.15) node[below] {$-5$};
\draw (2,-0.15) node[below] {$-2$};
\end{scope}

\begin{scope}[shift={(4,0)}]
\draw (0,0)--(2,0) (1,0)--(1,1);
\node [draw, fill=white, shape=rectangle, anchor=center] at (0,0) {};
\node [draw, fill=white, shape=rectangle, anchor=center] at (1,0) {};
 \filldraw (1,1) circle (2pt);
 \filldraw (2,0) circle (2pt);

\draw (1,1) node[right] {$-2$};
\draw (0,-0.15) node[below] {$-2$};
\draw (1,-0.15) node[below] {$-5$};
\draw (2,-0.15) node[below] {$-2$};
\end{scope}

\begin{scope}[shift={(0,-2.5)}]
\draw (0,0)--(2,0) (1,0)--(1,1);
\node [draw, fill=white, shape=rectangle, anchor=center] at (1,0) {};
\node [draw, fill=white, shape=rectangle, anchor=center] at (1,1) {};
 \filldraw (0,0) circle (2pt);
 \filldraw (2,0) circle (2pt);

\draw (1,1) node[right] {$-2$};
\draw (0,-0.15) node[below] {$-2$};
\draw (1,-0.15) node[below] {$-5$};
\draw (2,-0.15) node[below] {$-2$};
\end{scope}

\begin{scope}[shift={(4,-2.5)}]
\draw (0,0)--(2,0) (1,0)--(1,1);
\node [draw, fill=white, shape=rectangle, anchor=center] at (1,0) {};
\node [draw, fill=white, shape=rectangle, anchor=center] at (2,0) {};
 \filldraw (0,0) circle (2pt);
 \filldraw (1,1) circle (2pt);

\draw (1,1) node[right] {$-2$};
\draw (0,-0.15) node[below] {$-2$};
\draw (1,-0.15) node[below] {$-5$};
\draw (2,-0.15) node[below] {$-2$};
\end{scope}

\end{tikzpicture}
$$
 \end{example}

\subsection{Monodromy substitutions and rational blowdowns}

In ~\cite{FS97}, R. Fintushel and R. Stern introduced the following rational blowdown surgery: Let $C_p$ be a smooth $4$-manifold obtained from plumbing disk bundles over $2$-sphere according to the following linear diagram.
\begin{center}
\begin{tikzpicture}
\filldraw (0,0) circle (2pt);
\filldraw (1,0) circle (2pt);
\filldraw (3,0) circle (2pt);
\filldraw (4,0) circle (2pt);
\draw (2,0) node {$\cdots$};
\draw (-0.2,0) node[above] {$-(p+2)$};
\draw (1,0) node[above] {$-2$};
\draw (3,0) node[above] {$-2$};
\draw (4,0) node[above] {$-2$};
\draw (0,0)--(1,0);
\draw (3,0)--(4,0);
\draw (1,0)--(1.5,0) (2.5,0)--(3,0);
\end{tikzpicture}
\end{center}
Then the boundary of $C_p$ is a lens space $L(p^2,p-1)$, which bounds a rational ball $B_p$, i.e., $H_*(B_p; \mathbb{Q})=H_*(D^4; \mathbb{Q})$. Therefore, if there exists an embedding $C_p$ in a smooth $4$-manifold $X$, one can construct a new smooth $4$-manifold $X_p$ by replacing $C_p$ with $B_p$. This procedure is called a \emph{rational blowdown} surgery and we say that $X_p$ is obtained by rationally blowing down $X$. 
Furthermore, M. Symington~\cite{Sy} proved that a rational blowdown $4$-manifold
 $X_{p}$ admits a symplectic structure in some cases.
 For example, if $X$ is a symplectic $4$-manifold containing
 a configuration $C_{p}$ such that all $2$-spheres in $C_{p}$
 are symplectically embedded and intersect positively,
 then the rational blowdown $4$-manifold $X_{p}$ also admits a symplectic structure.
Later, the Fintushel-Stern's rational blowdown surgery is generalized by J. Park~\cite{P} using a configuration $C_{p,q}$ obtained from plumbing disk bundles over a $2$-sphere according to the dual resolution graph of $L(p^2,pq-1)$, which also bounds a rational ball $B_{p,q}$ as follows:

\begin{defn}
 Suppose $X$ is a smooth $4$-manifold
 containing a configuration $C_{p,q}$. Then one can construct
 a new smooth $4$-manifold $X_{p,q}$,
 called a {\em $($generalized$)$ rational blowdown} of $X$,
 by replacing $C_{p,q}$ with a rational ball $B_{p,q}$.
 We also call this a {\em (generalized) rational blowdown} surgery.
\end{defn}

Next, we introduce a notion of monodromy substitution that is closely related to a rational blowdown surgery. That is, we briefly explain how to replace a rational blowdown surgery with a monodromy substitution in some cases. 

Suppose that a symplectic $4$-manifold $X$ with a possibly non-empty boundary admits a Lefschetz fibration characterized by a monodromy factorization $\mathcal{W}_X$. Assume that $W$ and $W'$ are distinct products of right-handed Dehn twists which yield the same element as a global monodromy in the mapping class group of the fiber. If there is a partial monodromy factorization equal to $W$ in the monodromy factorization $\mathcal{W}_X$ of $X$, then we can obtain a Lefschetz fibration on a new symplectic $4$-manifold $X'$ whose monodromy factorization $\mathcal{W}_{X'}$ is obtained by replacing $W$ with $W'$. 
Note that the diffeomorphism types and the induced contact structures of $\partial X$ and $\partial X'$ are the same. We call this procedure a \emph{monodromy substitution}. 
For example, a famous lantern relation yields a rational blowdown surgery involving the lens space $L(4,1)$~\cite{EnGu}: the PALF with monodromy $abcd$ yields a configuration $C_2$, while the PALF with monodromy $xyz$ yields a rational ball $B_2$. As another example, the \emph{daisy relation}, introduced in~\cite{EnMV}, yields a monodromy substitution for a configuration $C_{p}$ and a rational ball $B_{p}$. One can also find a monodromy substitution for a (generalized) rational blowdown surgery in~\cite{EnMV}.

\begin{figure}[h]
\begin{center}
\begin{tikzpicture}[scale=1.3]
\begin{scope}
\draw (3,-3) ellipse (2 and 1.2);
\draw (3,-3) circle (0.15);
\draw (4,-3) circle (0.15);
\draw (2,-3) circle (0.15);
\draw[red] (3,-3) circle (0.35);
\draw[red] (4,-3) circle (0.35);
\draw[red] (2,-3) circle (0.35);
\draw[red] (3,-3) ellipse (1.8 and 0.8);

\draw (2,-3.35) node[below] {$a$};
\draw (3,-3.35) node[below] {$b$};
\draw (4,-3.35) node[below] {$c$};
\draw (3.45,-4) node {$d$};
\end{scope}
\begin{scope}[shift={(5,0)}]
\draw (3,-3) ellipse (2 and 1.2);
\draw (3,-3) circle (0.15);
\draw (4.25,-3) circle (0.15);
\draw (1.75,-3) circle (0.15);
\draw[red] (1.5,-3) arc (180:360: 0.25);
\draw[red] (4,-3) arc (180:360: 0.25);
\draw[red] (4.5,-3) to [out=up, in=up] (1.5,-3);
\draw[red] (2,-3) to [out=up, in=up] (4,-3);
\draw[red] (3.675,-3) ellipse (1.1 and 0.4); 
\draw[red] (2.375,-3) ellipse (1.1 and 0.4); 
\draw (1.75,-3.5) node {$x$};
\draw (3,-2) node {$z$};
\draw (4.25,-3.5) node {$y$};

\end{scope}
\end{tikzpicture}
\end{center}
\caption{Lantern relation.}
\end{figure}
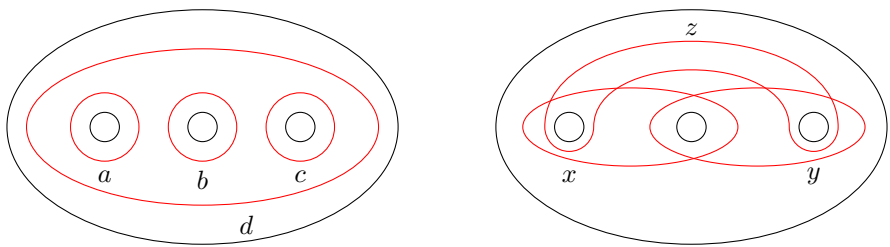


\section{Review for the cyclic singularity cases}

We briefly review P.~Lisca's classification of minimal symplectic fillings and Bhupal-Ozbagci's algorithm of positive allowable Lefschetz fibrations for the minimal symplectic fillings of a cyclic quotient surface singularity in this section.

We first review Lisca's classification (refer to~\cite{L} for details): Let $(X,0)$ be a cyclic quotient surface singularity of type $\frac{1}{n}(1,q)$ with $(n,q)=1$ whose link is the lens space $L(n,q)$. P. Lisca~\cite{L} parametrized all minimal symplectic fillings of $(X,0)$ by a set $\mathcal{Z}_e(\displaystyle\frac{n}{n-q})$ of certain sequences of integers $\mathbf{n}=(n_1, \dotsc, n_e) \in \mathbb{N}^e$ (see Definition~\ref{defn-Z_e} below). That is, he constructed a compact oriented symplectic 4-manifold $W_{n,q}(\mathbf{n})$ with boundary $L(n,q)$ that is parametrized by $\mathbf{n} \in \mathcal{Z}_e(\displaystyle\frac{n}{n-q})$ using surgery diagrams. He also showed that $W_{n,q}(\mathbf{n})$ is in fact a Stein filling of $L(n,q)$. Finally, he proved that any symplectic filling of $L(n,q)$ is orientation-preserving diffeomorphic to a manifold obtained by blow-ups from one of $W_{n,q}(\mathbf{n})$'s. 
Hence, every minimal symplectic filling is diffeomorphic to 
one of the $W_{n,q}(\mathbf{n})$'s.
On the other hand, J.~Christophersen~\cite{C} and J.~Stevens~\cite{Ste1} parametrized all reduced irreducible components of the versal deformation space of $(X, 0)$ using the same set $\mathcal{Z}_e(\displaystyle\frac{n}{n-q})$ but with different methods. Thus, it was a natural conjecture that every Milnor fiber of $(X,0)$ is diffeomorphic to a $W_{n,q}(\mathbf{n})$, which are parametrized by the same element in $\mathcal{Z}_e(\displaystyle\frac{n}{n-q})$.
The conjecture was proven true by A.~Nemethi and P.~Popescu-Pampu~\cite{NPo}.

\begin{defn}
An $e$-tuple of nonnegative integers $(n_1\dots, n_e)$ is called \emph{admissible} if every denominator in the continued fraction $[n_1\dots, n_e]$ is positive.
It is easy to see that an admissible $e$-tuple of nonnegative integers is either $0$ or only consists of positive integers. Let $\mathcal{Z}_e$ be the set of all admissible $e$-tuples such that $[n_1\dots, n_e]=0$, i.e., let $\mathcal{Z}_e$ be the set of all $e$-tuples of integers which can be obtained via a sequence of blow-ups from $(0)$. For $\displaystyle \frac{n}{n-q}=[a_1,\dots, a_e]$, we define 
 $$\mathcal{Z}_e(\displaystyle\frac{n}{n-q}):=\{(n_1,\dots, n_e)\in\mathcal{Z}_e\vert\phantom{0} 0\leq n_i \leq a_i, \
 \textrm{for} \ i=1, \dots, e \}.$$ 
\label{defn-Z_e}
\end{defn}

 P. Lisca constructed a smooth $4$-manifold $W_{n,q}(\mathbf{n})$ for each $e$-tuple $\mathbf{n} \in \mathcal{Z}_e(\displaystyle\frac{n}{n-q})$ whose boundary is diffeomorphic to the link of a cyclic singularity of type $A_{n,q}$, also known as the lens space $L(n,q)$ using a corbodism $C_{n,q}(\mathbf{n})$ between $S^1\times S^2$ and $L(n,q)$:
 First consider a linear chain consisting of $e$ number of unknots in $S^3$ with framings $n_1, \dots, n_e$, respectively. Let $N(\mathbf{n})$ be a $3$-manifold obtained by Dehn surgery on this framed link. Since $[n_1,\dots, n_e]=0$, it is clear that $N(\mathbf{n})$ is diffeomorphic to $S^1\times S^2$. 
Then, using a framed link $L$ in $N(\mathbf{n})$ as shown in Figure~\ref{link}, one can obtain a cobordism $C_{n,q}(\mathbf{n})$ by attaching $4$-dimensional $2$-handles to the $L \subset S^1 \times S^2 \times \{1\} \subset S^1\times S^2 \times I$. 
Finally, choosing a diffeomorphism $\varphi : N(\mathbf{n}) \rightarrow S^1\times S^2$ again,
one can construct a desired smooth (in fact symplectic) $4$-manifold
$$W_{n,q}(\mathbf{n}): =C_{n,q}(\mathbf{n})\cup_\varphi S^1\times D^3.$$
Note that, since any self-diffeomorphism $\varphi$ of $S^1\times S^2$ extends to $S^1\times D^3$, the diffeomorphism type of $W_{n,q}(\mathbf{n})$ is independent of the choice of $\varphi$.
According to P. Lisca~\cite{L}, any symplectic filling of $(L(n,q),\xi_{st})$ is orientation-preserving diffeomorphic to a blow-up of $W_{n,q}(\mathbf{n})$ for some $\mathbf{n} \in \mathcal{Z}_e(\displaystyle\frac{n}{n-q})$.

\begin{figure}[h]
\begin{center}
\begin{tikzpicture}[scale=0.4]
\begin{knot}[
	clip width=5,
	clip radius = 2pt,
	end tolerance = 1pt,
]
\strand (0,0) ellipse (3 and 1.5);
\strand (5,0) ellipse (3 and 1.5);
\strand  (10,1.5) arc (90:270: 3 and 1.5);

\strand (14,1.5) arc (90:-90: 3 and 1.5);
\strand (19,0) ellipse (3 and 1.5);
\strand (24,0) ellipse (3 and 1.5);

\strand[red] (-1.00,-1.75) ellipse (0.25 and 0.8);
\strand[red] (-0.25,-1.75) ellipse (0.25 and 0.8);
\draw (0.5,-2) node {$\cdots$};
\strand[red] (1.25,-1.75) ellipse (0.25 and 0.8);
\draw (-1,-2.55) node[below] {$-1$};
\draw (-0.05,-2.55) node[below] {$-1$};
\draw (1.25,-2.55) node[below] {$-1$};
	\draw [decorate,decoration={brace,mirror,amplitude=5pt},xshift=0pt,yshift=-10pt]
	(-1,-3.25) -- (1.25,-3.25) node [black,midway,yshift=-10pt] 
	{\footnotesize $a_1-n_1$};

\strand[red] (4.00,-1.75) ellipse (0.25 and 0.8);
\strand[red] (4.75,-1.75) ellipse (0.25 and 0.8);
\draw (5.5,-2) node {$\cdots$};
\strand[red] (6.25,-1.75) ellipse (0.25 and 0.8);
\draw (4,-2.55) node[below] {$-1$};
\draw (4.95,-2.55) node[below] {$-1$};
\draw (6.25,-2.55) node[below] {$-1$};
	\draw [decorate,decoration={brace,mirror,amplitude=5pt},xshift=0pt,yshift=-10pt]
	(4,-3.25) -- (6.25,-3.25) node [black,midway,yshift=-10pt] 
	{\footnotesize $a_2-n_2$};

\strand[red] (18.00,-1.75) ellipse (0.25 and 0.8);
\strand[red] (18.75,-1.75) ellipse (0.25 and 0.8);
\draw (19.5,-2) node {$\cdots$};
\strand[red] (20.25,-1.75) ellipse (0.25 and 0.8);
\draw (18,-2.55) node[below] {$-1$};
\draw (18.95,-2.55) node[below] {$-1$};
\draw (20.25,-2.55) node[below] {$-1$};
	\draw [decorate,decoration={brace,mirror,amplitude=5pt},xshift=0pt,yshift=-10pt]
	(18,-3.25) -- (20.25,-3.25) node [black,midway,yshift=-10pt] 
	{\footnotesize $a_{e-1}-n_{e-1}$};

\strand[red] (23.00,-1.75) ellipse (0.25 and 0.8);
\strand[red] (23.75,-1.75) ellipse (0.25 and 0.8);
\draw (24.5,-2) node {$\cdots$};
\strand[red] (25.25,-1.75) ellipse (0.25 and 0.8);
\draw (23,-2.55) node[below] {$-1$};
\draw (23.95,-2.55) node[below] {$-1$};
\draw (25.25,-2.55) node[below] {$-1$};
	\draw [decorate,decoration={brace,mirror,amplitude=5pt},xshift=0pt,yshift=-10pt]
	(23,-3.25) -- (25.25,-3.25) node [black,midway,yshift=-10pt] 
	{\footnotesize $a_e-n_e$};

\flipcrossings{1,9,17,19,3,5,7,11,13,15,21,23,25,27,29,31}
\end{knot}
\draw (12,0) node {$\cdots$};
\draw (0,1.5) node[above] {$n_1$};
\draw (5,1.5) node[above] {$n_2$};
\draw (19,1.5) node[above] {$n_{e-1}$};
\draw (24,1.5) node[above] {$n_e$};
\end{tikzpicture}

\end{center}
\caption{The framed link $L \subset N(\mathbf{n})$. }
\label{link}
\end{figure}
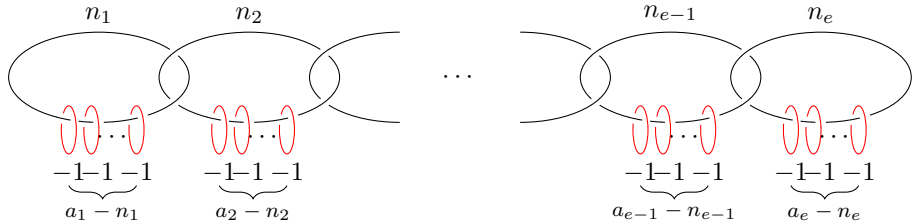

Next, for each $\mathbf{n} \in\mathcal{Z}_e(\displaystyle\frac{n}{n-q})$, M.~Bhupal and B.~Ozbagci constructed a genus-$0$ PALF on $S^1 \times D^3$ so that the attaching circles of $(-1)$-framed $2$-handles in $W_{n, q}(\mathbf{n})$ lie on a generic fiber (refer to~\cite{BOz} for details).

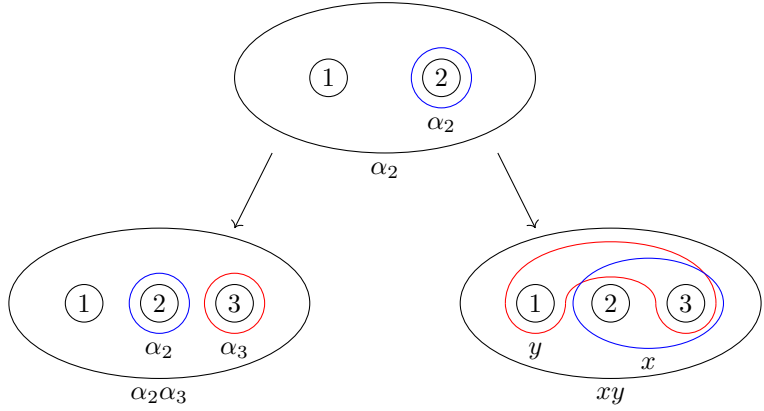
\begin{figure}[h]
\begin{center}
\begin{tikzpicture}[scale=1]
\draw (0,0) ellipse (2 and 1);
\draw (-0.75,0) circle (0.25);
\draw (0.75,0) circle (0.25);
\draw[blue] (0.75,0) circle (0.4);
\draw (0.75,-0.4) node[below] {$\alpha_2$};
\draw (-0.75,0) node {$1$};
\draw (0.75,0) node {$2$};
\draw (0,-1) node[below] {$\alpha_2$};
\draw (-3,-3) ellipse (2 and 1);
\draw (-3,-4) node[below] {$\alpha_2 \alpha_3$};
\draw (-3,-3) circle (0.25);
\draw (-4,-3) circle (0.25);
\draw (-2,-3) circle (0.25);
\draw (-4,-3) node {$1$};
\draw (-3,-3) node {$2$};
\draw (-2,-3) node {$3$};
\draw[blue] (-3,-3) circle (0.4);
\draw[red] (-2,-3) circle (0.4);
\draw (-3,-3.4) node[below] {$\alpha_2$};
\draw (-2,-3.4) node[below] {$\alpha_3$};
\draw (3,-3) ellipse (2 and 1);
\draw (3,-4) node[below] {$xy $};
\draw (3,-3) circle (0.25);
\draw (4,-3) circle (0.25);
\draw (2,-3) circle (0.25);
\draw (2,-3) node {$1$};
\draw (3,-3) node {$2$};
\draw (4,-3) node {$3$};
\draw[red] (1.6,-3) arc (180:360: 0.4);
\draw[red] (3.6,-3) arc (180:360: 0.4);
\draw[red] (3.6,-3) to [out=up, in=up] (2.4,-3);
\draw[red] (1.6,-3) to [out=up, in=up] (4.4,-3);
\draw[blue] (3.5,-3) ellipse (1 and 0.6); 
\draw (2,-3.4) node[below] {$y$};
\draw (3.5,-3.6) node[below] {$x$};
\draw[->]  (-1.5, -1)--(-2,-2);
\draw[->]  (1.5, -1)--(2,-2);
\end{tikzpicture}
\end{center}
\caption{PALF on $S^1\times D^3$ corresponds to $(1,1)$, $(1,2,1)$ and $(2,1,2)$.}
\label{s1s2}
\end{figure}

One can construct a PALF on $S^1 \times D^3$ over the disk corresponding to each $\mathbf{n} \in \mathcal{Z}_e(\displaystyle\frac{n}{n-q})$. Note that this depends on a blow-up sequence from $(0)$. 
For each $\mathbf{n} \in \mathcal{Z}_e(\displaystyle\frac{n}{n-q})$, a generic fiber $F_{\mathbf{n}}$ is the disk with $e$ holes. We may assume the holes in the disk are ordered linearly from left to right, as shown in Figure~\ref{s1s2}. 
If $\mathbf{n} \in \mathcal{Z}_e(\displaystyle\frac{n}{n-q})$ is obtained from $\mathbf{n'}\in \mathcal{Z}_{e-1}$ by blowing up the $j^{\textrm{th}}$ term $(1\leq j \leq e-2)$, we construct a generic fiber $F_{\mathbf{n}}$ to be a surface obtained from $F_{\mathbf{n'}}$ by splitting the $(j+1)^{\textrm{th}}$ hole so that vanishing cycles $\{x_i\, |\, i=1,2,\dots ,e-2\}$ for $\mathbf{n'}$ are naturally extended to $\{\widetilde{x_i}\, |\, i=1,2,\dots ,e-2\}$ in $F_{\mathbf{n}}$. 
The monodromy factorization subsequently changes from $x_1x_2\cdots x_{e-2}$ to $\widetilde{x_1}\widetilde{x_2}\cdots \widetilde{x}_{e-2}\beta_j$, where $\beta_j$ is a curve on $F_{\mathbf{n}}$ that encircles the $1$, $\dots j$, $(j+2)$-labelled holes while skipping the $(j+1)$-labelled hole. 
For a blowing up of the $(e-1)^{\textrm{th}}$ term, we just add the $e^{\textrm{th}}$ hole to $F_{\mathbf{n'}}$ at the right of the $(e-1)^{\textrm{th}}$ hole and add a Dehn twist on a curve encircling the $e^{\textrm{th}}$ hole. 
In this way, we obtain a genus-$0$ PALF on $W_{n,q}(\mathbf{n})$ such that, if the attaching circle of a $(-1)$-framed $2$-handle $h$ in $W_{n,q}(\mathbf{n})$ is the meridian of a $n_i$-framed unknot,
the $2$-handle $h$ corresponds to a Dehn twist on a curve $\gamma_i$ encircling the first $i$ holes. 
We refer to Figure~\ref{W92} below for an example.
\\

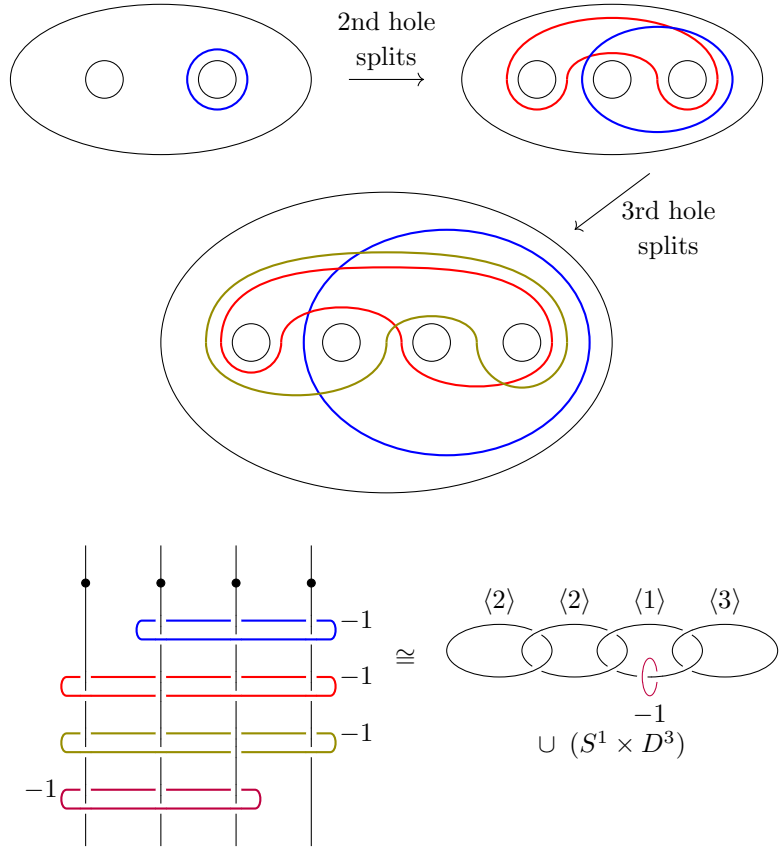
\begin{figure}[htb]
\begin{center}
\begin{tikzpicture}[scale=1]
\begin{scope}

\draw (0,0) ellipse (2 and 1);
\draw (-0.75,0) circle (0.25);
\draw (0.75,0) circle (0.25);
\draw[blue, thick] (0.75,0) circle (0.4);

\draw[->] (2.5,0)-- (3.5,0);
\draw[->] (6.5, -1.25)--(5.5,-2);
\draw (6.75,-2) node[text width=3cm,align=center] {3rd hole \\ splits};
\draw (3,0) node[above, text width=3cm,align=center] {2nd hole \\ splits};

\draw (6,0) ellipse (2 and 1);
\draw (6,-0) circle (0.25);
\draw (7,-0) circle (0.25);
\draw (5,-0) circle (0.25);
\draw[red, thick] (4.6,-0) arc (180:360: 0.4);
\draw[red, thick] (6.6,-0) arc (180:360: 0.4);
\draw[red, thick] (6.6,-0) to [out=up, in=up] (5.4,-0);
\draw[red, thick] (4.6,-0) to [out=up, in=up] (7.4,-0);
\draw[blue, thick] (6.6,-0) ellipse (1 and 0.7);

\draw (3,-3.5) ellipse (3 and 2);
\draw (1.2,-3.5) circle (0.25);
\draw (2.4,-3.5) circle (0.25);
\draw (3.6,-3.5) circle (0.25);
\draw (4.8,-3.5) circle (0.25);

\draw[blue, thick] (3.8,-3.5) ellipse (1.9 and 1.5);
\draw[red, thick] (1.6,-3.50) arc (0:-180: 0.4);
\draw[red, thick] (5.2,-3.50) to [out=down, in=down] (3.2,-3.5);
\draw[red, thick] (1.6,-3.50) to [out=up, in=up] (3.2,-3.5);
\draw[red, thick] (0.8,-3.50) to [out=up, in=left] (3,-2.5) to [out=right, in=up] (5.2,-3.5);

\draw[olive, thick] (0.6,-3.5) to [out=down, in=down] (3,-3.5);
\draw[olive, thick] (0.6,-3.50) to [out=up, in=left] (3,-2.3) to [out=right, in=up] (5.4,-3.5);
\draw[olive, thick] (5.4,-3.5) arc (0:-180:0.6);
\draw[olive, thick] (3,-3.50) to [out=up, in=up] (4.2,-3.5);

\end{scope}

\begin{scope}[shift={(-1,-6.7)}]
\begin{knot}[
	clip width=5,
	clip radius = 2pt,
	end tolerance = 1pt,
]
\strand (0,0.5)--(0,-3.5);
\strand (1,0.5)--(1,-3.5);
\strand (2,0.5)--(2,-3.5);
\strand (3,0.5)--(3,-3.5);
\strand[blue, thick] (3.25,-0.5) to [out=right, in=right] (3.25,-0.75)--(0.75,-0.75) to [out=left, in=left] (0.75,-0.5)--cycle;
\strand[red, thick] (3.25,-1.25) to [out=right, in=right] (3.25,-1.5)--(-0.25,-1.5) to [out=left, in=left] (-0.25,-1.25)--cycle;

\strand[olive, thick] (3.25,-2) to [out=right, in=right] (3.25,-2.25)--(-0.25,-2.25) to [out=left, in=left] (-0.25,-2)--cycle;

\strand[purple, thick] (2.25,-2.75) to [out=right, in=right] (2.25,-3)--(-0.25,-3) to [out=left, in=left] (-0.25,-2.75)--cycle;

\flipcrossings{7,15,23,1,17,15,25,3,11,27,5,13,21}
\end{knot}
\filldraw (0,0) circle (1.5pt);
\filldraw (1,0) circle (1.5pt);
\filldraw (2,0) circle (1.5pt);
\filldraw (3,0) circle (1.5pt);
\draw (3.25,-0.5) node[right] {$-1$};
\draw (3.25,-1.25) node[right] {$-1$};
\draw (3.25,-2) node[right] {$-1$};
\draw (-0.25,-2.75) node[left] {$-1$};
\draw (4.25,-1) node {$\cong$};
\end{scope}

\begin{scope}[shift={(3.5,-7.6)}]
\begin{knot}[
	clip width=5,
	clip radius = 2pt,
	end tolerance = 1pt,
]

\strand (1,0) ellipse (0.7 and 0.35);
\strand (2,0) ellipse (0.7 and 0.35);
\strand (3,0) ellipse (0.7 and 0.35);
\strand (4,0) ellipse (0.7 and 0.35);
\strand[purple] (3,-0.35) ellipse (0.1 and 0.25);
\flipcrossings{1,3,5,7}
\end{knot}
\draw (1,0.35) node[above] {$\langle 2 \rangle$};
\draw (2,0.35) node[above] {$\langle 2 \rangle$};
\draw (3,0.35) node[above] {$\langle 1 \rangle$};
\draw (4,0.35) node[above] {$\langle 3 \rangle$};
\draw (3,-0.6) node[below] {$-1$};
\draw (2.5,-1.25) node {$\cup \phantom{0}(S^1\times D^3)$};
\end{scope}
\end{tikzpicture}
\end{center}
\caption{PALF on $W_{9,2}((2, 2, 1, 3))$.}
\label{W92}
\end{figure}

M.~Bhupal and B.~Ozbagci~\cite{BOz} also showed that the monodromy factorization for each minimal symplectic filling of the lens space $L(n,q)$ can be obtained by a sequence of monodromy substitutions that can be interpreted as a sequence of rational blowdowns from the minimal resolution of the corresponding singularity. 

\section{Lefschetz fibrations on minimal resolutions}

 In this section, as a first step towards proving our main theorem (Theorem~\ref{mainthm}), we construct a genus-$0$ or genus-$1$ positive allowable Lefschetz fibarion (PALF) on each minimal resolution of non-cyclic quotient surface singularities. 
 Note that a genus of the PALF is determined only by the existence of a bad vertex in the minimal resolution graph of the corresponding singularity. That is, a genus of the PALF is $0$ if the minimal resolution graph has no bad vertex, and a genus is $1$ otherwise.  
We subsequently check that a contact structure on the boundary induced from the PALF is the Milnor fillable contact structure 
so that every PALF obtained via monodromy substitutions is also a Stein filling of $(L, \xi_{\textrm{st}})$.

\subsection{No bad vertex cases}
If the minimal resolution graph $\Gamma$ of a quotient surface singularity does not have a bad vertex, then there is a well-known genus-$0$ PALF on the minimal resolution $\Gamma$, as demonstrated by D.~Gay and T.~Mark~\cite{GaM}. 
We consider the $2$-sphere $\Sigma_{i}$ with $b_i$ holes for each vertex $v_i$ with degree $-b_i$. Then the fiber surface $\Sigma$ is obtained by gluing $\Sigma_{i}$ along their boundaries according to $\Gamma$ and the vanishing cycles are the set of curves parallel to the boundary of each $\Sigma_{i}$. Note that we end up with only one right-handed Dehn twist on the connecting neck. We refer to Figure~\ref{genus0} below for an example.
Note that this PALF is compatible with the symplectic structure $\omega$ given by a convex plumbing $X_{\Gamma}$ of symplectic surfaces, where each vertex represents a symplectic surface with self-intersection $-b_i$ that intersect each other $\omega$-orthogonally according to $\Gamma$. 
Thus, the induced contact structure $\xi$ on the boundary $\partial X_{\Gamma}$ is compatible with the open book decomposition coming from the aforementioned PALF. H.~Park and A.~Stipsicz~\cite{PS} showed that $\xi$ is indeed the Milnor fillable contact structure. In fact, their argument holds for any negative-definite intersection matrix of $\Gamma$. 

\begin{figure}[htb]
\begin{center}
\begin{tikzpicture}[scale=0.6]
\begin{scope}
\draw[rounded corners] (-0.5,0)--(1.5,0) (-0.5,1)--(1.5,1) ;
\draw[<->](1.8,0.5)--(2.6,0.5);
\draw[rounded corners] (2.9,0)--(5.9,0) (2.9,1)--(3.9,1)--(3.9,2) (4.9,2)--(4.9,1)--(5.9,1);
\draw[rounded corners]  (7.3, 0)--(10.3,0) (7.3,1)--(8.3,1)--(8.3,2) (9.3,2)--(9.3,1)--(10.3,1);
\draw[rounded corners] (3.9,3.4)--(3.9,5.4) (4.9,3.4)--(4.9,5.4);
\draw[<->](1.8,0.5)--(2.6,0.5);
\draw[<->](6.2,0.5)--(7,0.5);
\draw[<->](4.4,2.3)--(4.4,3.1);

\draw (-0.50,0) arc (-90:90:0.2 and 0.5);
\draw (-0.50,0) arc (-90:-270:0.2 and 0.5);

\draw[red] (0,0) arc (-90:90:0.2 and 0.5);
\draw[dashed, red] (0,0) arc (-90:-270:0.2 and 0.5);

\draw (1.5,0) arc (-90:90:0.2 and 0.5);
\draw[dashed] (1.5,0) arc (-90:-270:0.2 and 0.5);

\draw[red] (1,0) arc (-90:90:0.2 and 0.5);
\draw[dashed, red] (1,0) arc (-90:-270:0.2 and 0.5);

\draw (2.9,0) arc (-90:-270:0.2 and 0.5);
\draw (2.9,0) arc (-90:90:0.2 and 0.5);

\draw[red] (3.4,0) arc (-90:90:0.2 and 0.5);
\draw[dashed, red] (3.4,0) arc (-90:-270:0.2 and 0.5);

\draw (5.9,0) arc (-90:90:0.2 and 0.5);
\draw[dashed] (5.9,0) arc (-90:-270:0.2 and 0.5);

\draw[red] (5.4,0) arc (-90:90:0.2 and 0.5);
\draw[dashed, red] (5.4,0) arc (-90:-270:0.2 and 0.5);

\draw (7.3,0) arc (-90:-270:0.2 and 0.5);
\draw (7.3,0) arc (-90:90:0.2 and 0.5);

\draw[red] (7.8,0) arc (-90:90:0.2 and 0.5);
\draw[dashed, red] (7.8,0) arc (-90:-270:0.2 and 0.5);

\draw (10.3,0) arc (-90:90:0.2 and 0.5);
\draw[dashed] (10.3,0) arc (-90:-270:0.2 and 0.5);

\draw[red] (9.8,0) arc (-90:90:0.2 and 0.5);
\draw[dashed, red] (9.8,0) arc (-90:-270:0.2 and 0.5);

\draw (3.9,2) arc (180:360:0.5 and 0.2);
\draw (3.9,2) arc (-180:-360:0.5 and 0.2);

\draw[red] (3.9,1.5) arc (180:360:0.5 and 0.2);
\draw[red,dashed] (3.9,1.5) arc (-180:-360:0.5 and 0.2);

\draw (3.9,3.4) arc (180:360:0.5 and 0.2);
\draw[dashed] (3.9,3.4) arc (-180:-360:0.5 and 0.2);

\draw[red] (3.9,3.9) arc (180:360:0.5 and 0.2);
\draw[red,dashed] (3.9,3.9) arc (-180:-360:0.5 and 0.2);

\draw (3.9,5.4) arc (180:360:0.5 and 0.2);
\draw (3.9,5.4) arc (-180:-360:0.5 and 0.2);

\draw[red] (3.9,4.9) arc (180:360:0.5 and 0.2);
\draw[red,dashed] (3.9,4.9) arc (-180:-360:0.5 and 0.2);

\draw (8.3,2) arc (180:360:0.5 and 0.2);
\draw (8.3,2) arc (-180:-360:0.5 and 0.2);

\draw[red] (8.3,1.5) arc (180:360:0.5 and 0.2);
\draw[red,dashed] (8.3,1.5) arc (-180:-360:0.5 and 0.2);

\draw[->,very thick] (11,1.5)--(12,1.5);

\end{scope}
\begin{scope}[shift={(13,0)}]
\draw[rounded corners] (0,0)--(6,0) (0,1)--(2,1)--(2,3) (3,3)--(3,1)--(4,1)--(4,2) (5,2)--(5,1)--(6,1);
\draw (0,0) arc (-90:90:0.2 and 0.5);
\draw (0,0) arc (-90:-270:0.2 and 0.5);

\draw[red] (0.5,0) arc (-90:90:0.2 and 0.5);
\draw[red,dashed] (0.5,0) arc (-90:-270:0.2 and 0.5);

\draw[red] (1.5,0) arc (-90:90:0.2 and 0.5);
\draw[red,dashed] (1.5,0) arc (-90:-270:0.2 and 0.5);

\draw (6,0) arc (-90:90:0.2 and 0.5);
\draw[dashed] (6,0) arc (-90:-270:0.2 and 0.5);

\draw[red] (5.5,0) arc (-90:90:0.2 and 0.5);
\draw[red,dashed] (5.5,0) arc (-90:-270:0.2 and 0.5);

\draw[red] (3.5,0) arc (-90:90:0.2 and 0.5);
\draw[red,dashed] (3.5,0) arc (-90:-270:0.2 and 0.5);

\draw (2,3) arc (180:360:0.5 and 0.2);
\draw (2,3) arc (-180:-360:0.5 and 0.2);

\draw[red] (2,2.5) arc (180:360:0.5 and 0.2);
\draw[red,dashed] (2,2.5) arc (-180:-360:0.5 and 0.2);
\draw[red] (2,2.5) arc (180:360:0.5 and 0.2);
\draw[red,dashed] (2,2.5) arc (-180:-360:0.5 and 0.2);
\draw[red] (2,1.5) arc (180:360:0.5 and 0.2);
\draw[red,dashed] (2,1.5) arc (-180:-360:0.5 and 0.2);

\draw (4,2) arc (180:360:0.5 and 0.2);
\draw (4,2) arc (-180:-360:0.5 and 0.2);

\draw[red] (4,1.5) arc (180:360:0.5 and 0.2);
\draw[red,dashed] (4,1.5) arc (-180:-360:0.5 and 0.2);

\end{scope}
\begin{scope}[shift={(0,-5)}]
\draw (5.4,0) ellipse (5 and 2);
\draw (5.4,0) circle (0.5);
\draw[red] (5.4,0) circle (0.65);
\draw[red] (5.4,0) circle (0.8);

\draw (3.2,0) circle (0.5);
\draw[red] (3.2,0) circle (0.65);
\draw[red] (3.2,0) circle (0.8);

\draw (7.6,0) circle (0.5);
\draw[red] (7.6,0) circle (0.65);

\draw[red] (4.3,0) ellipse (2.5 and 1.2);
\draw[red] (5.4,0) ellipse (4.5 and 1.7);
\draw[->,very thick] (11,0)--(12,0);
\begin{knot}[
	clip width=5,
	clip radius = 2pt,
	end tolerance = 1pt,
]
\filldraw (15,2.25) circle (2pt);
\filldraw (16.5,2.25) circle (2pt);
\filldraw (18,2.25) circle (2pt);
\strand (15,3)--(15,-3);
\strand (16.5,3)--(16.5,-3);
\strand (18,3)--(18,-3);
\strand[red](15,1) ellipse (0.5 and 0.2);
\strand[red](15,0) ellipse (0.5 and 0.2);
\strand[red](16.5,1) ellipse (0.5 and 0.2);
\strand[red](16.5,0) ellipse (0.5 and 0.2);

\strand[red](18,0.5) ellipse (0.5 and 0.2);

\strand[red](15.75,-1) ellipse (1.5 and 0.25);

\strand[red](16.5,-2) ellipse (2.5 and 0.25);

\flipcrossings{2,10,4,12,18,6,14,8,16,20}
\draw (14.5,1) node[left] {$-1$};
\draw (14.5,0) node[left] {$-1$};
\draw (14.25,-1) node[left] {$-1$};
\draw (14,-2) node[left] {$-1$};

\draw (18.5,0.5) node[right] {$-1$};
\draw (17,-0.1) node[right] {$-1$};
\draw (17,1.1) node[right] {$-1$};

\end{knot}
\end{scope}
\end{tikzpicture}
\caption{A genus-$0$ PALF on minimal resolution of $D_{8,3}$.}
\label{genus0}
\end{center}
\end{figure}
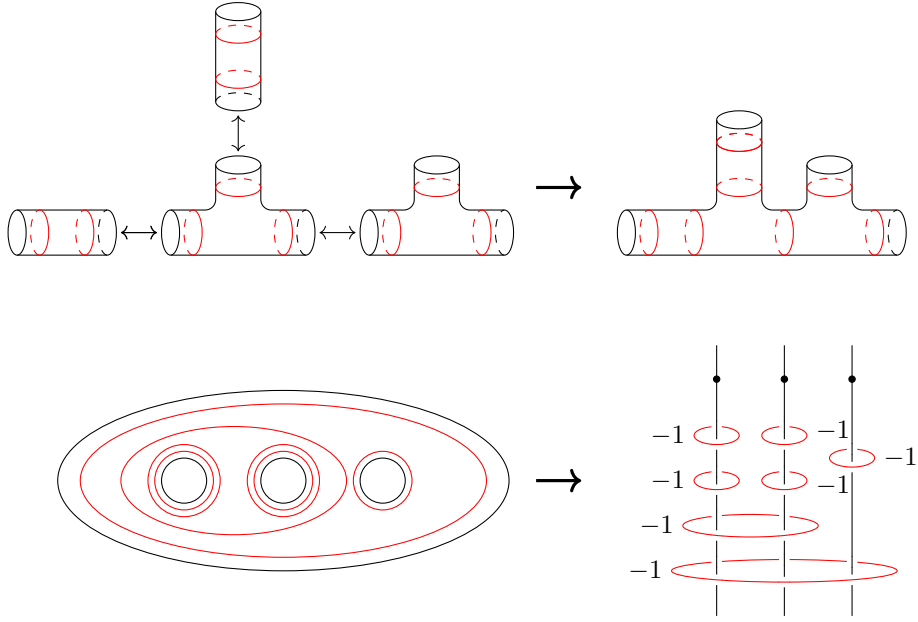

\subsection{Bad vertex cases}
If the minimal resolution graph $\Gamma$ of a non-cyclic quotient surface singularity has a bad vertex, we now construct a genus-$1$ PALF on the minimal resolution $\Gamma$ as follows: 
First we construct a PALF on $X_L$, where $X_L$ is the minimal resolution of a cyclic singularity determined by 
 a maximal linear subgraph $\Gamma_L$ of $\Gamma$. We consider a $4$-dimensional Kirby diagram of $X_L$, which can be easily obtained from the PALF of $X_L$. We could subsequently obtain a Kirby diagram of $X_{\Gamma}$ by adding a $2$-handle $h$ or two $2$-handles $\{h_1, h_2\}$ to that of $X_L$, depending on which type of arm is not in $\Gamma_L$. After introducing a cancelling $1$-handle/$2$-handle pair, the $2$-handles not coming from the Kirby diagram of $X_L$ can be thought of as vanishing cycles of a new fiber $F$, which is obtained by attaching a $1$-handle to the surface $F_L$. Note that the new fiber $F$ is a genus-$1$ surface with holes. We refer to Figure~\ref{genus1palf} below for an example.

\begin{figure}[htbp]
\begin{center}
\begin{tikzpicture}[scale=0.75]
\begin{scope}

\begin{knot}[
	clip width=5,
	clip radius = 2pt,
	end tolerance = 1pt,
]

\draw (0,0) -- (4,0);
\draw[dashed] (0,0.4) -- (4,0.4);
\draw[dashed] (0,-0.4) -- (4,-0.4);
\draw [dashed] (0,0.4) arc (90:270:0.4 and 0.4);
\draw [dashed] (4,-0.4) arc (-90:90:0.4 and 0.4);
\draw (2,0) -- (2,2);
\filldraw (2,0) circle (2pt) ;
\filldraw (0,0) circle (2pt) ;
\filldraw (4,0) circle (2pt) ;
\filldraw (2,2) circle (2pt) ;
\node at (0,-0.8) {$-2$};
\node at (2,-0.8) {$-2$};
\node at (4,-0.8) {$-3$};
\node at (2.5,2) {$-2$};
\node at (4,0.8) {$\Gamma_L$};
\draw [thick,->] (6,0) -- (7,0);

\strand (10,4) -- (10,-2.5); \strand (12,4) -- (12,-2.5);
\filldraw (10,3.5) circle (2pt);\filldraw (12,3.5) circle (2pt);
\strand (10,2.4) ellipse (0.8 and 0.3);
\strand (10,1.2) ellipse (0.8 and 0.3);
\strand (10,0) ellipse (0.8 and 0.3);
\strand (12,1.2) ellipse (0.8 and 0.3);
\strand (11,-1.3) ellipse (1.8 and 0.4);
\flipcrossings {8,12,2,4,6,10}
\end{knot}
\node at (10-0.8-0.5,2.4) {$-1$};
\node at (10-0.8-0.5,1.2) {$-1$};
\node at (10-0.8-0.5,0) {$-1$};
\node at (12+0.8+0.5,1.2) {$-1$};
\node at (10-0.8-0.5,-1.3) {$-1$};
\end{scope}

\begin{scope}[shift={(0,-9)}]

\begin{knot}[
	clip width=5,
	clip radius = 2pt,
	end tolerance = 1pt,
]

\draw (0,0) -- (4,0);
\draw (2,0) -- (2,2);
\filldraw (2,0) circle (2pt) ;
\filldraw (0,0) circle (2pt) ;
\filldraw (4,0) circle (2pt) ;
\filldraw (2,2) circle (2pt) ;
\node at (0,-0.5) {$-2$};
\node at (2,-0.5) {$-2$};
\node at (4,-0.5) {$-3$};
\node at (2.5,2) {$-2$};
\draw [thick,->] (6,0) -- (7,0);

\strand (10+0.1,3+0.2)..controls +(0.3,0) and +(0,-0.3) .. (10+0.8,3+0.3+0.2) .. controls +(0,0.3) and +(-0.3,0) .. (10+0.1,3+0.6+0.2) .. controls +(+0.3,0) and +(0,-0.3) .. (10-0.8,3+0.9+0.2) .. controls +(0,+0.3) and +(-0.3,0) .. (10+0.1,3+1.2+0.2) .. controls +(+0.3,0) and +(0,-0.3) .. (10+0.8,3+1.5+0.2) .. controls +(0,+0.3) and +(-0.3,0) .. (10+0.1,3+1.8+0.2) .. controls +(-1,0) and +(0,1) .. (10-1.6,3+0.9+0.2) .. controls +(0,-1) and +(-1,0) .. (10+0.1,3+0.2);
\strand (10,6) -- (10,-2.5); \strand (12,6) -- (12,-2.5);
\filldraw (10,5.5) circle (2pt);\filldraw (12,5.5) circle (2pt);
\strand (10,2.4) ellipse (0.8 and 0.3);
\strand (10,1.2) ellipse (0.8 and 0.3);
\strand (10,0) ellipse (0.8 and 0.3);
\strand (12,1.2) ellipse (0.8 and 0.3);
\strand (11,-1.3) ellipse (1.8 and 0.4);
\flipcrossings {8,12,1,3,6,10,14,16}
\end{knot}
\node at (10-0.8-0.5,2.4) {$-1$};
\node at (10-0.8-0.5,1.2) {$-1$};
\node at (10-0.8-0.5,0) {$-1$};
\node at (12+0.8+0.5,1.2) {$-1$};
\node at (10-0.8-0.5,-1.3) {$-1$};
\node at (10-1.6-0.5,3+0.9+0.2) {$0$};
\end{scope}

\begin{scope}

\begin{knot}[
	clip width=5,
	clip radius = 2pt,
	end tolerance = 1pt,
]

\draw (0,0) -- (4,0);
\draw[dashed] (0,0.4) -- (4,0.4);
\draw[dashed] (0,-0.4) -- (4,-0.4);
\draw [dashed] (0,0.4) arc (90:270:0.4 and 0.4);
\draw [dashed] (4,-0.4) arc (-90:90:0.4 and 0.4);
\draw (2,0) -- (2,2);
\filldraw (2,0) circle (2pt) ;
\filldraw (0,0) circle (2pt) ;
\filldraw (4,0) circle (2pt) ;
\filldraw (2,2) circle (2pt) ;
\node at (0,-0.8) {$-2$};
\node at (2,-0.8) {$-2$};
\node at (4,-0.8) {$-3$};
\node at (2.5,2) {$-2$};
\node at (4,0.8) {$\Gamma_L$};
\draw [thick,->] (6,0) -- (7,0);

\strand (10,4) -- (10,-2.5); \strand (12,4) -- (12,-2.5);
\filldraw (10,3.5) circle (2pt);\filldraw (12,3.5) circle (2pt);
\strand (10,2.4) ellipse (0.8 and 0.3);
\strand (10,1.2) ellipse (0.8 and 0.3);
\strand (10,0) ellipse (0.8 and 0.3);
\strand (12,1.2) ellipse (0.8 and 0.3);
\strand (11,-1.3) ellipse (1.8 and 0.4);
\flipcrossings {8,12,2,4,6,10}
\end{knot}
\node at (10-0.8-0.5,2.4) {$-1$};
\node at (10-0.8-0.5,1.2) {$-1$};
\node at (10-0.8-0.5,0) {$-1$};
\node at (12+0.8+0.5,1.2) {$-1$};
\node at (10-0.8-0.5,-1.3) {$-1$};
\end{scope}

\begin{scope}[shift={(-3.5,-18)}]

\begin{knot}[
	clip width=5,
	clip radius = 2pt,
	end tolerance = 1pt,
]

\draw [->] (14,6) -- (13,5);
\strand[blue] (10+0.1,3+0.2)..controls +(0.3,0) and +(0,-0.3) .. (10+0.8,3+0.3+0.2) .. controls +(0,0.3) and +(-0.3,0) .. (10+0.1,3+0.6+0.2) .. controls +(+0.3,0) and +(0,-0.3) .. (10-0.8,3+0.9+0.2) .. controls +(0,+0.3) and +(-0.3,0) .. (10+0.1,3+1.2+0.2) .. controls +(+0.3,0) and +(0,-0.3) .. (10+0.8,3+1.5+0.2) .. controls +(0,+0.3) and +(-0.3,0) .. (10+0.1,3+1.8+0.2) .. controls +(-1,0) and +(0,1) .. (10-3.5,3+0.9+0.2) .. controls +(0,-1) and +(-1,0) .. (10+0.1,3+0.2);
\strand (10,6) -- (10,-2.5); \strand (12,6) -- (12,-2.5);
\strand (8,6) -- (8,-2.5);
\filldraw (10,5.5) circle (2pt);\filldraw (12,5.5) circle (2pt);
\filldraw (8,5.5) circle (2pt);

\strand[red] (8,3+1.05) ellipse (0.8 and 0.3);

\strand[orange] (10,2.4) ellipse (0.8 and 0.3);
\strand[orange] (10,1.2) ellipse (0.8 and 0.3);
\strand (10,0) ellipse (0.8 and 0.3);
\strand (12,1.2) ellipse (0.8 and 0.3);
\strand (11,-1.3) ellipse (1.8 and 0.4);
\flipcrossings {8,12,1,3,10,14,16,20,5,18}
\end{knot}

\node at (8-1.1,3+1.05) {$-1$};

\node at (10-0.8-0.5,2.4) {$-1$};
\node at (10-0.8-0.5,1.2) {$-1$};
\node at (10-0.8-0.5,0) {$-1$};
\node at (12+0.8+0.5,1.2) {$-1$};
\node at (10-0.8-0.5,-1.3) {$-1$};
\node at (9,5.3) {$+1$};
\end{scope}
\end{tikzpicture}

\end{center}
\end{figure}
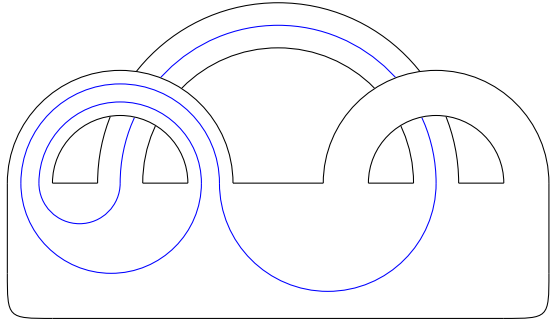
\begin{figure}[htbp]
\begin{center}

\begin{tikzpicture}[scale=0.6]

\draw (10,0) arc (0:180:4 and 4);
\draw (9,0) arc (0:180:3 and 3);
\draw[blue] (9.5,0) arc (0:180:3.5 and 3.5);
\filldraw[fill=white,white] (0,0)--(1,0) (1,0) arc (180:0:1.5 and 1.5) (4,0)--(5,0) (5,0) arc (0:180:2.5 and 2.5);
\filldraw[fill=white,white] (7,0)--(1+7,0) (1+7,0) arc (180:0:1.5 and 1.5) (4+7,0)--(5+7,0) (5+7,0) arc (0:180:2.5 and 2.5);
\draw (1,0)--(2,0) (3,0)--(4,0) (5,0)--(7,0) (8,0)--(9,0) (10,0)--(11,0);
\draw (5,0) arc (0:180:2.5 and 2.5);
\draw (4,0) arc (0:180:1.5 and 1.5);
\draw (5+7,0) arc (0:180:2.5 and 2.5);
\draw (4+7,0) arc (0:180:1.5 and 1.5);

\draw[blue] (5-0.3,0) arc (0:180:2.2 and 2.2);
\draw[blue] (4+0.3,0) arc (0:180:1.8 and 1.8);
\draw[blue] (2.5,0) arc (0:-180:0.9 and 0.9);
\draw[blue] (4.3,0) arc (0:-180:2 and 2);
\draw[blue] (9.5,0) arc (0:-180:2.4 and 2.4);

\draw (0,0) -- (0,-2) (1,-3) -- (11,-3) (12,-2) -- (12,0);
\draw (0,-2)..controls +(0,-1) and +(-1,0) ..(1,-3);
\draw (11,-3)..controls +(1,0) and +(0,-1) ..(12,-2);

\end{tikzpicture}

\end{center}
\caption{A genus-1 PALF on the minimal resolution of  $D_{5,3}$.}
\label{genus1palf}
\end{figure}

Next, we check that a contact structure on the boundary induced from the PALF constructed above is the Milnor fillable contact structure. 
First, recall that, the $2$-plane field $\xi$ induces a $Spin^c$ structure $t_{\xi}$ on $L$ for a contact $3$-manifold $(L,\xi)$. Furthermore, if $(W,J)$ is a Stein filling of $(L,\xi)$, then $t_{\xi}$ is a restriction of $Spin^c$ structure $\mathcal{S}$ on $W$ to $\partial W=L$ induced by its complex structure $J$ on $W$. On the other hand, there is a theorem of Gay-Stipsicz~\cite{GaS} that characterizes the contact structure on the link of a quotient surface singularity.

\begin{thm}[\cite{GaS}]
\label{GaS}
Suppose that a small Seifert $3$-manifold $M\! =\! M(s_0;r_1,r_2,r_3)$ satisfies $s_0 \leq -2$ and $M$ is an $L$-space. Then two tight contact structures $\xi_1,\xi_2$ on $M$ are isotopic if and only if $t_{\xi_1}=t_{\xi_2}$.
\end{thm}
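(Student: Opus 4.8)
The forward implication is formal: isotopic contact structures are homotopic as oriented $2$-plane fields, so they induce the same $Spin^c$ structure on $M$ and $t_{\xi_1}=t_{\xi_2}$. The plan for the converse is to run the convex-surface classification of tight contact structures on $M$, and then to invoke the $L$-space hypothesis to see that the resulting isotopy classes are separated by their $Spin^c$ structures.

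First I would exhibit $M$ as the boundary of a negative-definite star-shaped plumbing $X_\Gamma$: since $s_0\le -2$, the central vertex carries weight $s_0\le -2$ and the three arms are the Hirzebruch--Jung strings of the $r_i$, so every weight is $\le -2$ and the intersection form is negative definite. Then I would decompose $M$ along a family of vertical tori isolating the three exceptional fibers, together with a horizontal punctured surface over the base orbifold, and apply Honda's classification of tight contact structures on $T^2\times I$ and on solid tori, together with the analysis of the maximal twisting of a Legendrian regular fiber (Wu; Ghiggini--Lisca--Stipsicz, in the regime $s_0\le -2$). This yields that every tight contact structure on $M$ is isotopic to one obtained by Legendrian surgery on a chain-link realizing $\Gamma$ with a choice of stabilization signs at each vertex; in particular every tight contact structure on $M$ is Stein fillable, and one gets a finite list $\xi_1,\dots,\xi_N$ exhausting the isotopy classes, the Stein filling $W_k$ of $\xi_k$ being $X_\Gamma$ blown up a controlled number of times.

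Next, for each $k$ I would compute $c_1(W_k,J_k)\in H^2(W_k;\mathbb{Z})$ from the rotation numbers of the surgery link, restrict to $\partial W_k=M$ to read off $t_{\xi_k}$, and recover the $d_3$-invariant of $\xi_k$ from $c_1(W_k,J_k)^2$, $\sigma(W_k)$ and $\chi(W_k)$. The $L$-space hypothesis then does the separating: $\xi_k$ is fillable, so its contact class $c(\xi_k)\in\widehat{HF}(-M,t_{\xi_k})$ is nonzero; and $M$ being an $L$-space, $\widehat{HF}(-M)$ in each $Spin^c$ structure is $\mathbb{Z}$ concentrated in a single grading, so that grading is forced on $c(\xi_k)$, whence $d_3(\xi_k)$ depends only on $t_{\xi_k}$. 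Thus any two tight contact structures on $M$ with the same $Spin^c$ structure are homotopic as $2$-plane fields, and to finish I would check against the classification of the previous step that the stabilization-sign data distinguishing the $\xi_k$ within a fixed $Spin^c$ class is already recorded by the $d_3$-invariant, which upgrades \emph{homotopic} to \emph{isotopic} and gives injectivity of $\xi\mapsto t_\xi$.

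The last step is where I expect the real difficulty. Everything up to the convex-surface classification is standard, but nothing formal prevents two non-isotopic tight contact structures on $M$ from carrying the same $Spin^c$ structure; excluding this is exactly what the $L$-space hypothesis is for, through the rigidity of the grading of the nonzero contact invariant, and the delicate point is dovetailing this Heegaard Floer input with the rotation-number bookkeeping coming out of the convex-surface picture.
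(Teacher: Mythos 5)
The paper does not prove this statement at all: it is quoted verbatim from Gay--Stipsicz \cite{GaS} and used as a black box, so there is no internal proof to compare against. Measured against the argument in the cited literature, your outline follows the standard route (convex-surface classification following Wu and Ghiggini--Lisca--Stipsicz, Stein fillability of every tight structure on such an $M$, Heegaard Floer input from the $L$-space condition), but the closing step has a genuine gap.

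The problem is the passage from \emph{homotopic} to \emph{isotopic}. Your grading argument is fine as far as it goes: since $c(\xi)\in\widehat{HF}(-M,t_\xi)$ is nonzero for a fillable $\xi$ and each $Spin^c$ summand of an $L$-space is $\mathbb{Z}$ concentrated in a single grading, $d_3(\xi)$ is a function of $t_\xi$, and on a rational homology sphere this makes any two tight structures with the same $Spin^c$ structure homotopic as oriented $2$-plane fields. But that is all it gives. Your proposed finish --- that the stabilization-sign data distinguishing the $\xi_k$ within a fixed $Spin^c$ class ``is already recorded by the $d_3$-invariant'' --- is vacuous, because you have just shown that $d_3$ is \emph{constant} on each $Spin^c$ class, so it cannot record anything further; and in general homotopic tight contact structures need not be isotopic. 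What is actually needed, and what the cited proof supplies, is that within a single $Spin^c$ class there is at most one isotopy class of tight contact structure. This comes from combining (a) the fact that the finitely many tight structures produced by the classification have pairwise distinct contact invariants in $\widehat{HF}(-M)$ --- a nontrivial theorem of Lisca--Stipsicz and Ghiggini proved by rank counts in the surgery exact triangle, not a formal consequence of fillability --- with (b) the rank-one property of each $Spin^c$ summand, which leaves room for only one nonzero class per summand. Equivalently, one can check that the convex-surface upper bound on the number of isotopy classes equals the number of distinct $Spin^c$ structures realized by the Legendrian surgery presentations. Without one of these inputs your argument stalls at ``homotopic,'' which is strictly weaker than the claim.
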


\begin{thm}
The contact structure on the link of non-cyclic quotient surface singularities induced by the PALF constructed above is Milnor fillable. 
\end{thm}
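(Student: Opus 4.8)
The plan is to handle the two cases of the construction separately. When $\Gamma$ has no bad vertex, the genus-$0$ PALF of Section~4.1 is the open book compatible with the convex symplectic plumbing $X_\Gamma$, so the induced contact structure is $\xi_{\mathrm{st}}$ by the theorem of H.~Park and A.~Stipsicz~\cite{PS} (which, as noted, applies to any negative-definite $\Gamma$), and there is nothing to prove. So assume $\Gamma$ has a bad vertex. Inspecting the resolution graphs of Section~2.2 shows that this happens exactly when the central vertex has valence three and self-intersection $-2$; in particular $\Gamma$ is star-shaped with three arms and $L$ is a small Seifert fibered rational homology $3$-sphere. Moreover $L$ is an $L$-space, being the link of a rational surface singularity, and in the normalization relevant to Theorem~\ref{GaS} its central invariant is $s_0=-2$. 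Hence Theorem~\ref{GaS} applies to $L$.

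Both contact structures to be compared are tight: $\xi$ is Stein fillable because an allowable Lefschetz fibration over $D^2$ endows its total space with a Stein structure~\cite{LoPi,AOz}, and $\xi_{\mathrm{st}}$ is Stein fillable, e.g.\ by a Milnor fiber. Therefore, by Theorem~\ref{GaS}, it suffices to show that $\xi$ and $\xi_{\mathrm{st}}$ induce the same $\mathrm{Spin}^c$ structure on $L$, and I would establish this by computing first Chern classes. The key simplification is that the $4$-manifold $X_\Gamma$ carrying the genus-$1$ PALF is obtained from the Kirby diagram of $X_L$ by adding one or two $2$-handles and then a cancelling $1$-$2$ pair, so it is diffeomorphic to the minimal resolution $\widetilde X$; since $\Gamma$ is a tree, $X_\Gamma\cong\widetilde X$ is simply connected and $H^2(X_\Gamma;\mathbb Z)$ is torsion-free, so a $\mathrm{Spin}^c$ structure on it is pinned down by its first Chern class. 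Now $t_{\xi_{\mathrm{st}}}$ is the restriction to $L$ of the canonical $\mathrm{Spin}^c$ structure of $\widetilde X$, with Chern class $-K_{\widetilde X}$ determined by the adjunction equalities $-K_{\widetilde X}\cdot E_v=E_v^2+2=2-b_v$ over the exceptional rational curves; while $t_\xi$ is the restriction of the $\mathrm{Spin}^c$ structure of the Stein structure coming from the PALF, whose Chern class, by Gompf's formula, takes on the capped-off core of the $2$-handle attached along each vanishing cycle the rotation number of that vanishing cycle in the page.

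It therefore remains to read off the rotation numbers of the vanishing cycles of the genus-$1$ PALF --- those inherited from the PALF of $X_L$ together with the one (or two) coming from the arm absent in $\Gamma_L$ and the Dehn twist on the handle that raises the genus --- and to verify that the resulting list of integers is $(2-b_v)_v$, so that $c_1$ of the Stein structure equals $-K_{\widetilde X}$ in $H^2(X_\Gamma;\mathbb Z)$. Restricting to $L$ then yields $t_\xi=t_{\xi_{\mathrm{st}}}$, and Theorem~\ref{GaS} concludes. The main obstacle is precisely this last bookkeeping: one must fix orientations, a basis of $H^2$, and a trivialization of the page near the genus-raising handle so that rotation numbers are measured consistently with the linear part (where the matching with $-K$ is already built into Bhupal--Ozbagci's cyclic algorithm), and then carefully track the contribution of the arc(s) running over that handle; the rest is a direct application of the quoted theorems.
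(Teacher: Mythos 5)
Your proposal follows essentially the same route as the paper's own proof: reduce via Theorem~\ref{GaS} and the simple connectivity of $X_{\Gamma}$ to checking that $c_1$ of the Stein structure, computed from the rotation numbers of the vanishing cycles via Gompf's formula, satisfies the adjunction equality at every vertex, with the vertices of $\Gamma_L$ handled by~\cite{PS}. The only difference is that the paper actually executes the rotation-number bookkeeping you defer, splitting into the dihedral/type $(3,2)$ and type $(3,1)$ cases, expressing the classes of the vertices outside $\Gamma_L$ in terms of the new vanishing cycles, and verifying adjunction with rotation numbers $-1$, $+1$, $+1$ for the blue, red, and orange cycles.
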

\begin{proof}
First note that, since a convex plumbing $X_{\Gamma}$ of the minimal resolution graph $\Gamma$ of a quotient singularity is simply connected, the $Spin^c$ structure $\mathcal{S}$ on $X_{\Gamma}$ is determined by the first Chern class $c_1(\mathcal{S})$. 
On the other hand, $t_{\xi_{\mathrm{st}}}$ is a restriction of $\mathcal{S}$ whose first Chern class $c_1(\mathcal{S})$ satisfies the adjunction equality on each vertex in $\Gamma$. 
Hence, according to Theorem~\ref{GaS} above, a PALF on $X_{\Gamma}$ induces the Milnor fillable contact structure on the boundary if and only if $c_1(J)$ satisfies the adjunction equality for each vertex in $\Gamma$, where $J$ is a complex structure coming from the Stein structure of the PALF.
From the PALF on $X_{\Gamma}$ constructed above, we can compute the first Chern class  $c_1(J)$ in terms of vanishing cycles $C_i$: $c_1(J)$ is represented by a co-cycle whose value on the $2$-handle corresponding to $C_i$ is the rotation number $r(C_i)$~\cite{G}, which can be computed once we fix a trivialization of the tangent bundle of a page~\cite{EtOz2}.
The vertices in $\Gamma_L$ satisfy the adjunction equality because the PALF for the no bad vertex cases induces the Milnor fillable contact structure \cite{PS}.
The homology classes of the vertices not in $\Gamma_L$ can be represented by new vanishing cycles together with some vanishing cycles in $\Gamma_L$, thus we can check whether they satisfy the adjunction equality by computing the rotation number of the vanishing cycles. Note that all non-cyclic quotient singularities can be divided into the following three cases: Dihedral singularities, singularities of type $(3, 1)$, and singularities of type $(3, 2)$. \vspace{0.5 ex}

\emph{Dihedral singularities and singularities of type $(3,2)$}:
There is only one degree $-2$ vertex $v$ which is not in $\Gamma_L$ for dihedral cases. If we construct a genus-$1$ PALF on the minimal resolution as shown above, then the global monodromy of the PALF should contain $C_{\textrm{red}}C_{\textrm{blue}}C_{\textrm{orange}}^2$ as a subword and $v$ is homologically equal to $C_{\textrm{blue}}-C_{\textrm{red}}+2C_{\textrm{orange}}$. Once we fix a trivialization of the tangent bundle of fiber $F$ as a natural extension of a trivialization of the tangent bundle of $\mathbb{R}^2$, the rotation numbers of blue, red, and orange vanishing cycles are $-1$, $+1$, and $+1$, respectively. This means that the $-2$ vertex $v$ satisfies the adjunction equality. The only difference between the dihedral case and the type $(3, 2)$ case is that there is another degree $-2$ vertex $v'$ connected to $v$ which is not in $\Gamma_L$ for the $(3, 2)$ case. Hence, for type $(3, 2)$ singularities, the global monodromy of the PALF on the minimal resolution should contain $C_{\textrm{red}}C_{\textrm{blue}}^2C_{\textrm{orange}}^2$ as a subword. The vertex $v'$ is also homologically $C_{\textrm{blue}}-C_{\textrm{blue}}$, meaning every vertex in $\Gamma$ satisfies the adjunction equality. See Figure ~\ref{genus1dihedral}.\vspace{0.5 em}

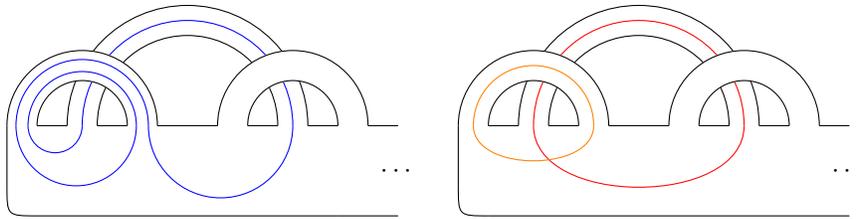
\begin{figure}[htbp]
\begin{center}

\begin{tikzpicture}[scale=0.4]

\begin{scope}
\draw (10,0) arc (0:180:4 and 4);
\draw (9,0) arc (0:180:3 and 3);
\draw[blue] (9.5,0) arc (0:180:3.5 and 3.5);
\filldraw[fill=white,white] (0,0)--(1,0) (1,0) arc (180:0:1.5 and 1.5) (4,0)--(5,0) (5,0) arc (0:180:2.5 and 2.5);
\filldraw[fill=white,white] (7,0)--(1+7,0) (1+7,0) arc (180:0:1.5 and 1.5) (4+7,0)--(5+7,0) (5+7,0) arc (0:180:2.5 and 2.5);
\draw (1,0)--(2,0) (3,0)--(4,0) (5,0)--(7,0) (8,0)--(9,0) (10,0)--(11,0);
\draw (5,0) arc (0:180:2.5 and 2.5);
\draw (4,0) arc (0:180:1.5 and 1.5);
\draw (5+7,0) arc (0:180:2.5 and 2.5);
\draw (4+7,0) arc (0:180:1.5 and 1.5);

\draw[blue] (5-0.3,0) arc (0:180:2.2 and 2.2);
\draw[blue] (4+0.3,0) arc (0:180:1.8 and 1.8);
\draw[blue] (2.5,0) arc (0:-180:0.9 and 0.9);
\draw[blue] (4.3,0) arc (0:-180:2 and 2);
\draw[blue] (9.5,0) arc (0:-180:2.4 and 2.4);

\draw (0,0) -- (0,-2) (1,-3) -- (11,-3) ;
\draw (0,-2)..controls +(0,-1) and +(-1,0) ..(1,-3);
\draw (11,-3)--(13,-3);
\draw (12,0)--(13,0);
\draw (13,-1.5) node {$\cdots$};
\end{scope}
\begin{scope}[shift={(15,0)}]
\draw (10,0) arc (0:180:4 and 4);
\draw (9,0) arc (0:180:3 and 3);
\draw[red] (9.5,0) arc (0:180:3.5 and 3.5);
\draw[red] (9.5,0) to [out=down, in=down] (2.5,0);
\filldraw[fill=white,white] (0,0)--(1,0) (1,0) arc (180:0:1.5 and 1.5) (4,0)--(5,0) (5,0) arc (0:180:2.5 and 2.5);
\filldraw[fill=white,white] (7,0)--(1+7,0) (1+7,0) arc (180:0:1.5 and 1.5) (4+7,0)--(5+7,0) (5+7,0) arc (0:180:2.5 and 2.5);
\draw (1,0)--(2,0) (3,0)--(4,0) (5,0)--(7,0) (8,0)--(9,0) (10,0)--(11,0);
\draw (5,0) arc (0:180:2.5 and 2.5);
\draw (4,0) arc (0:180:1.5 and 1.5);
\draw (5+7,0) arc (0:180:2.5 and 2.5);
\draw (4+7,0) arc (0:180:1.5 and 1.5);
\draw[orange] (4.5,0) arc (0:180:2 and 2);
\draw[orange] (4.5,0) to [out=down, in=down] (0.5,0);


\draw (0,0) -- (0,-2) (1,-3) -- (11,-3) ;
\draw (0,-2)..controls +(0,-1) and +(-1,0) ..(1,-3);
\draw (11,-3)--(13,-3);
\draw (12,0)--(13,0);
\draw (13,-1.5) node {$\cdots$};

\end{scope}
\end{tikzpicture}

\end{center}
\caption{A genus-$1$ PALF  on the minimal resolution of dihedral singularities.}
\label{genus1dihedral}
\end{figure}

\emph{Singularities of type (3,1)}:
Consider a maximal linear subgraph $\Gamma_L$ starting with a degree $-3$ vertex $v$. Since $\Gamma_L$ starts with a degree $-3$ vertex $v$, the global monodromy of the PALF should contain $C_{\textrm{red}}C_{\textrm{blue}}C_{\textrm{olive}}C_{\textrm{orange}}$ as a subword. A similar argument shows that the genus-$1$ PALF we constructed induces a Milnor fillable contact structure on the boundary. See Figure~\ref{genus1other}.


\begin{figure}[htbp]
\begin{tikzpicture}[scale=0.4]

\draw (0,0)--(-1,0);
\draw (-1,0) arc (0:180:2.5 and 2.5);
\draw (-2,0) arc (0:180:1.5 and 1.5);
\draw (-2,0)--(-5,0);

\draw(12,0)--(14,0);


\draw (10,0) arc (0:180:4 and 4);
\draw (9,0) arc (0:180:3 and 3);
\draw[blue] (9.5,0) arc (0:180:3.5 and 3.5);
\filldraw[fill=white,white] (0,0)--(1.5,0) (1.5,0) arc (180:0:1 and 1) (3.5,0)--(5,0) (5,0) arc (0:180:2.5 and 2.5);
\filldraw[fill=white,white] (7,0)--(1+7,0) (1+7,0) arc (180:0:1.5 and 1.5) (4+7,0)--(5+7,0) (5+7,0) arc (0:180:2.5 and 2.5);
\draw (1.5,0)--(2,0) (3,0)--(3.5,0) (5,0)--(7,0) (8,0)--(9,0) (10,0)--(11,0);
\draw (5,0) arc (0:180:2.5 and 2.5);
\draw (3.5,0) arc (0:180:1 and 1);
\draw (5+7,0) arc (0:180:2.5 and 2.5);
\draw (4+7,0) arc (0:180:1.5 and 1.5);

\draw[blue] (4.2,0) arc (0:180:1.7 and 1.7);
\draw[blue] (3.8,0) arc (0:180:1.3 and 1.3);
\draw[blue] (2.5,0) arc (0:-180:0.65 and 0.65);
\draw[blue] (9.5,0) to [out=down, in=down] (4.2,0);
\draw[blue] (-1.7,0) arc (0:180:1.8);
\draw[blue] (-5.3,0) to [out=down, in=left] (-0.6,-2) to [out=right, in=down] (3.8,0);
\draw[blue] (-1.7,0) to [out=down, in=left] (-0.45,-1) to [out=right, in=down]  (0.8,0);

\draw  (-6,0)--(-6,-2) (-5,-3) -- (14,-3);
\draw (-6,-2)..controls +(0,-1) and +(-1,0) ..(-5,-3);
\draw (14,-1.5) node {$\cdots$};
\end{tikzpicture}

\vspace{1 em}

\begin{tikzpicture}[scale=0.4]

\draw (0,0)--(-1,0);
\draw (-1,0) arc (0:180:2.5 and 2.5);
\draw (-2,0) arc (0:180:1.5 and 1.5);
\draw (-2,0)--(-5,0);

\draw(12,0)--(14,0);



\draw (10,0) arc (0:180:4 and 4);
\draw (9,0) arc (0:180:3 and 3);
\draw[red] (9.5,0) arc (0:180:3.5 and 3.5);
\draw[red] (9.5,0) to [out=down, in=down] (2.5,0);
\filldraw[fill=white,white] (0,0)--(1,0) (1,0) arc (180:0:1.5 and 1.5) (4,0)--(5,0) (5,0) arc (0:180:2.5 and 2.5);
\filldraw[fill=white,white] (7,0)--(1+7,0) (1+7,0) arc (180:0:1.5 and 1.5) (4+7,0)--(5+7,0) (5+7,0) arc (0:180:2.5 and 2.5);
\draw (1,0)--(2,0) (3,0)--(4,0) (5,0)--(7,0) (8,0)--(9,0) (10,0)--(11,0);
\draw (5,0) arc (0:180:2.5 and 2.5);
\draw[olive] (4.7,0) arc (0:180:2.2);
\draw[olive] (-1.3,0) arc (0:180:2.2);
\draw (4,0) arc (0:180:1.5 and 1.5);
\draw (5+7,0) arc (0:180:2.5 and 2.5);
\draw (4+7,0) arc (0:180:1.5 and 1.5);

\draw[olive] (4.7,0) to [out=down,in=right] (0,-2) to [out=left,in=down] (-5.7,0);
\draw[olive] (0.3,0) to [out=down,in=down] (-1.3,0);

\draw[orange] (4.3,0) arc (0:180: 1.8);
\draw[orange] (4.3,0) to [out=down,in=down] (0.7,0);





\draw  (-6,0)--(-6,-2) (-5,-3) -- (14,-3);
\draw (-6,-2)..controls +(0,-1) and +(-1,0) ..(-5,-3);
\draw (14,-1.5) node {$\cdots$};
\end{tikzpicture}

\caption{A genus-$1$ PALF on the minimal resolution of type $(3,1)$ singularities.}
\label{genus1other}
\end{figure}
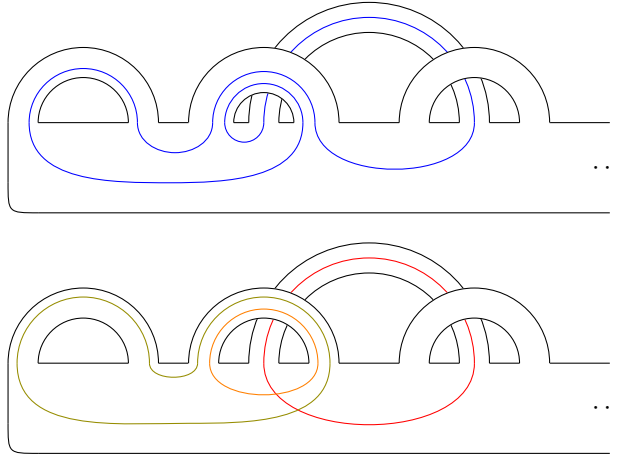


\end{proof}

%
%
%
\section{Lefschetz fibrations on minimal symplectic fillings}

As mentioned in the Introduction, M.~Bhupal and K.~Ono~\cite{BOn} listed all possible minimal symplectic fillings for non-cyclic quotient surface singularities. In fact, they showed that each minimal symplectic filling of a non-cyclic singularity $(X,0)$ is orientation-preserving diffeomorphic to $Z-\nu(E_{\infty})$, where $E_{\infty}$ is the \emph{compactifying divisor} of $X$ embedded in a rational symplectic $4$-manifold $Z$. They also found all possible pairs of $(Z, E_{\infty})$ for non-cyclic quotient  singularities. 
On the other hand, H.~Park, J.~Park, D.~Shin, and G.~Urz{\'u}a~\cite{PPSU} observed that the number of $P$-resolutions in J.~Stevens~\cite{Ste2} and that of minimal symplectic fillings in Bhupal-Ono's list~\cite{BOn} are nearly equal. Since there is a one-to-one correspondence between Milnor fibers and $P$-resolutions for quotient singularities~\cite{KSB}, it is natural to ask whether every minimal symplectic filling of non-cyclic quotient singularities is a Milnor fiber. 
This was proven in ~\cite{PPSU} using the corresponding complex model.
In fact, they even constructed an explicit one-to-one correspondence between the minimal symplectic fillings and the Milnor fibers of non-cyclic quotient surface singularities. 

What follows is a strategy for proving our main theorem (Theorem~\ref{mainthm}). 
For a given $P$-resolution $Y$ of a non-cyclic quotient singularity $X$, we construct a PALF on the minimal resolution of $X$, after appropriate monodromy substitutions, the resulting $4$-manifold is diffeomorphic to a $4$-manifold obtained by rationally blowing down all singularities of class $T$ in the minimal resolution $Z$ of $Y$. Since any Milnor fiber of quotient surface singularities can be obtained by rationally blowing down all singularities of class $T$ in the minimal resolution of the corresponding $P$-resolution topologically, we would be done.

Now we construct a PALF on each minimal symplectic filling of a non-cyclic quotient  singularity $X$ via the corresponding $P$-resolution $Y$. In other words, we first construct a PALF on the minimal resolution of $X$ and 
we find a suitable monodromy substitution to obtain a PALF on the $4$-manifold obtained by rationally blowing down all singularities of class $T$ in $Z$.
Let $\Gamma_Z$ be the dual graph of the minimal resolution $Z$ of $Y$. For the sake of convenience, we divide all $P$-resolutions into the following two cases: those with and without a maximal linear subgraph $\Gamma_L$ of $\Gamma_Z$ containing all singularities of class $T$.

\subsection{Case 1}
\label{case1} Let $Y$ be a $P$-resolution of a non-cyclic quotient singularity $X$ whose minimal resolution graph $\Gamma_Z$ has a maximal linear subgraph $\Gamma_L$ containing all singularities of class $T$ in $Y$.
Note that the subgraph $\Gamma_L$ becomes the minimal resolution graph of a $P$-resolution $Y'$ for some cyclic quotient singularity $X'$.
Then, by combining a PALF on the minimal resolution of $X'$ and a technique developed in Section~4, we can construct a PALF on the minimal resolution of $X$ based on the PALF of the minimal resolution of $X'$. 
Explicitly, 
starting from a PALF $(F_{X'}, y_1y_2\cdots y_m)$ 
on the minimal resolution of $X'$, 
we obtain a PALF ($F_X, x_1\cdots x_n\widetilde{y}_1\widetilde{y}_2\cdots\widetilde{y}_m)$ 
on the minimal resolution of $X$, where the vanishing cycles $\widetilde{y}_i$'s in $F_X$ are natural extensions of the corresponding vanishing cycles $y_i$'s in $F_{X'}$, and the vanishing cycles $\{x_1, \ldots, x_n\}$ come from the corresponding vertices in the arm which is not contained in $\Gamma_L$.
Note that a genus of the generic fiber $F_X$ depends on the existence of a bad vertex in the minimal resolution of $X$.
We subsequently find a monodromy substitution that yields a PALF on $Y$.
Since $Y'$ contains all singularities of class $T$ lying in $Y$, 
if a monodromy substitution of the form 
$y_1y_2\cdots y_m=z_1z_2\cdots z_l$ 
yields a PALF on $Y'$, then $(F_X, x_1\cdots x_n\widetilde{z}_1\widetilde{z}_2\cdots\widetilde{z}_l)$ 
yields a desired PALF on $Y$, where the vanishing cycles $\widetilde{z}_i$'s in $F_X$ are natural extensions of the corresponding vanishing cycles $z_i$'s in $F_{X'}$.

If the genus of generic fiber $F_X$ is $1$, then a monodromy substitution of the form $\widetilde{y}_1\widetilde{y}_2\cdots \widetilde{y}_m=\widetilde{z}_1\widetilde{z}_2\cdots \widetilde{z}_l$ still can be interpreted as a sequence of rational blowdowns.
If the genus of the generic fiber $F_X$ is $0$, then there could be a vertex whose degree is strictly less than $-2$ in the arm not contained in $\Gamma_L$. In this case, a monodromy substitution of the form $\widetilde{y}_1\widetilde{y}_2\cdots \widetilde{y}_m=\widetilde{z}_1\widetilde{z}_2\cdots \widetilde{z}_l$ in $F_X$ does not correspond to a sequence of rational blowdown surgeries because, in general, there could be more holes in $F_{X}$ enclosed by $\widetilde{y_i}$ and $\widetilde{z_j}$ than holes in $F_{X'}$ enclosed by $y_i$ and $z_j$ for some $\widetilde{y_i}$ and $\widetilde{z_j}$. The construction of the PALF on the minimal resolution shows that there is a vanishing cycle $x_i$ enclosing only the new hole for each new hole in $F_X$. Thus, after adding such $\{x_i\}$ to both sides, $x_{i_1}x_{i_2}\cdots x_{i_k}\widetilde{y}_1\widetilde{y}_2\cdots \widetilde{y}_m=x_{i_1}x_{i_2}\cdots x_{i_k}\widetilde{z}_1\widetilde{z}_2\cdots \widetilde{z}_l$ is a positive stabilization of $y_1y_2\cdots y_m=z_1z_2\cdots z_l$ which also can be interpreted as a sequence of rational blowdowns. See Example~\ref{examcase1} below.


\begin{remark}
\label{remark1}
The minimal resolution graph of $X$ shows that there is a symplectic cobordism between a lens space determined by $X'$ and the link of $X$. Therefore, the minimal symplectic filling of $X$ correponding to $Y$ is obtained from the minimal symplectic filling of $X'$ corresponding to $Y'$ by symplectically gluing the cobordism. Since every minimal sympelctic filling of a lens space is obtained from a sequence of rational blowdowns from the minimal resolution, we can conclude that every minimal symplectic filling corresponding to a $P$-resolution $Y$ in \nameref{case1} is obtained via a sequence of rational blowdowns from the minimal resoluiton.
\end{remark}

\begin{example}
\label{examcase1}
Let $(X,0)$ be a tetrahedral singularity of type $T_{6(5-2)+5}$ which has the following $P$-resolution $Y$:
\begin{center}
\begin{tikzpicture}
\draw (0,0)--(2,0) (1,0)--(1,1);
\node [draw, fill=white, shape=rectangle, anchor=center] at (0,0) {};
\node [draw, fill=white, shape=rectangle, anchor=center] at (1,0) {};
\node [draw, fill=white, shape=rectangle, anchor=center] at (2,0) {};
 \filldraw (1,1) circle (2pt);
\draw (1,1) node[right] {$-3$};
\draw (0,-0.15) node[below] {$-2$};
\draw (1,-0.15) node[below] {$-5$};
\draw (2,-0.15) node[below] {$-3$};
\end{tikzpicture}
\end{center} 
Thus, a PALF on a $P$-resolution $Y' =$
\begin{tikzpicture}
\draw (0,0)--(2,0); 
\node [draw, fill=white, shape=rectangle, anchor=center] at (0,0) {};
\node [draw, fill=white, shape=rectangle, anchor=center] at (1,0) {};
\node [draw, fill=white, shape=rectangle, anchor=center] at (2,0) {};
\draw (0,0.15) node[above] {$-2$};
\draw (1,0.15) node[above] {$-5$};
\draw (2,0.15) node[above] {$-3$};
\end{tikzpicture}
of 
\begin{tikzpicture}
\draw (0,0)--(2,0); 
\filldraw (0,0) circle (2pt);
\filldraw (1,0) circle (2pt);
\filldraw (2,0) circle (2pt);

\draw (0,0.15) node[above] {$-2$};
\draw (1,0.15) node[above] {$-5$};
\draw (2,0.15) node[above] {$-3$};
\end{tikzpicture}
can be obtained via a monodromy substitution of the form
$$\alpha_1^2\alpha_2\alpha_3\alpha_4\alpha_5\gamma_4\gamma_5
=xyzw\gamma_3$$
which is shown in Figure~\ref{Tetrahedral} below.
\begin{figure}[htb]
\begin{center}
\begin{tikzpicture}[scale=1.0]
\begin{scope}
\begin{knot}[
	clip width=5,
	clip radius = 2pt,
	end tolerance = 1pt,
]
\strand (0,0.5)--(0,-3.5);
\strand (1,0.5)--(1,-3.5);
\strand (2,0.5)--(2,-3.5);
\strand (3,0.5)--(3,-3.5);
\strand (4,0.5)--(4,-3.5);




\strand[red, thick] (0.25,-0.5) to [out=right, in=right] (0.25,-0.75)--(-0.25,-0.75) to [out=left, in=left] (-0.25,-0.5)--cycle;

\strand[red, thick] (0.25,-1.25) to [out=right, in=right] (0.25,-1.5)--(-0.25,-1.5) to [out=left, in=left] (-0.25,-1.25)--cycle;

\strand[red, thick] (1.25,-0.95) to [out=right, in=right] (1.25,-1.2)--(0.75,-1.2) to [out=left, in=left] (0.75,-0.95)--cycle;
\strand[red, thick] (2.25,-0.95) to [out=right, in=right] (2.25,-1.2)--(1.75,-1.2) to [out=left, in=left] (1.75,-0.95)--cycle;
\strand[red, thick] (3.25,-0.95) to [out=right, in=right] (3.25,-1.2)--(2.75,-1.2) to [out=left, in=left] (2.75,-0.95)--cycle;
\strand[red, thick] (4.25,-0.95) to [out=right, in=right] (4.25,-1.2)--(3.75,-1.2) to [out=left, in=left] (3.75,-0.95)--cycle;
\strand[red, thick] (3.25,-2) to [out=right, in=right] (3.25,-2.25)--(-0.25,-2.25) to [out=left, in=left] (-0.25,-2)--cycle;

\strand[red, thick] (4.25,-2.75) to [out=right, in=right] (4.25,-3)--(-0.25,-3) to [out=left, in=left] (-0.25,-2.75)--cycle;

\flipcrossings{1,3,5,7,9,11,13,15,17,19,21,23,25,27,29}
\end{knot}
\filldraw (0,0) circle (1.5pt);
\filldraw (1,0) circle (1.5pt);
\filldraw (2,0) circle (1.5pt);
\filldraw (3,0) circle (1.5pt);
\filldraw (4,0) circle (1.5pt);
\draw (0.35,-0.5) node[right] {$\alpha_1$};

\draw (1.25,-0.75) node[right] {$\alpha_2$};
\draw (2.25,-0.75) node[right] {$\alpha_3$};
\draw (3.25,-0.75) node[right] {$\alpha_4$};
\draw (4.25,-0.75) node[right] {$\alpha_5$};
\draw (3.35,-2) node[right] {$\gamma_4$};
\draw (4.35,-2.75) node[right] {$\gamma_5$};

\draw[->] (4.5,-1.75)--(5.25,-1.75);
\end{scope}

\begin{scope}[shift={(6,0.25)}]
\begin{knot}[
	clip width=5,
	clip radius = 2pt,
	end tolerance = 1pt,
]
\strand (0,0.5)--(0,-4);
\strand (1,0.5)--(1,-4);
\strand (2,0.5)--(2,-4);
\strand (3,0.5)--(3,-4);
\strand (4,0.5)--(4,-4);

\strand[red, thick] (4.25,-0.5) to [out=right, in=right] (4.25,-0.75)--(0.75,-0.75) to [out=left, in=left] (0.75,-0.5)--cycle;

\strand[red, thick] (4.25,-1.25) to [out=right, in=right] (4.25,-1.5)--(-0.25,-1.5) to [out=left, in=left] (-0.25,-1.25)--cycle;

\strand[red, thick] (3.25,-2) to [out=right, in=right] (3.25,-2.25)--(-0.25,-2.25) to [out=left, in=left] (-0.25,-2)--cycle;
\strand[red, thick] (3.25,-2.75) to [out=right, in=right] (3.25,-3)--(-0.25,-3) to [out=left, in=left] (-0.25,-2.75)--cycle;

\strand[red, thick] (2.25,-3.5) to [out=right, in=right] (2.25,-3.75)--(-0.25,-3.75) to [out=left, in=left] (-0.25,-3.5)--cycle;

\flipcrossings{9,19,29,37,1,39,3,23,33,5,15,35,7,17,27}
\end{knot}
\filldraw (0,0) circle (1.5pt);
\filldraw (1,0) circle (1.5pt);
\filldraw (2,0) circle (1.5pt);
\filldraw (3,0) circle (1.5pt);
\filldraw (4,0) circle (1.5pt);
\draw (4.35,-0.5) node[right] {$x$};
\draw (4.35,-1.25) node[right] {$y$};
\draw (3.35,-2) node[right] {$z$};
\draw (3.35,-2.75) node[right] {$w$};
\draw (2.35,-3.5) node[right] {$\gamma_3$};
\end{scope}
\end{tikzpicture}
\end{center}
\caption{A PALF on a $P$-resolution $Y'$.}
\label{Tetrahedral}
\end{figure}
Hence, we obtain a PALF $\beta_1\beta_2\widetilde{x}
\widetilde{y}\widetilde{z}\widetilde{w}\widetilde{\gamma}_3$ on the $P$-resolution $Y$ from a PALF $\beta_1\beta_2\widetilde{\alpha}_1^2\widetilde{\alpha}_2
\widetilde{\alpha}_3\widetilde{\alpha}_4\widetilde{\alpha}_5
\widetilde{\gamma}_4\widetilde{\gamma}_5$ on the minimal resolution of $X$ via a monodromy substitution of the form
$$\beta_2\widetilde{\alpha}_1^2\widetilde{\alpha}_2
\widetilde{\alpha}_3\widetilde{\alpha}_4\widetilde{\alpha}_5
\widetilde{\gamma}_4\widetilde{\gamma}_5=\beta_2\widetilde{x}
\widetilde{y}\widetilde{z}\widetilde{w}\widetilde{\gamma}_3,$$
which can be topologically interpreted as a rational blowdown surgery. See Figure~\ref{Partial-Y}.
\begin{figure}[h]
\begin{center}
\begin{tikzpicture}[scale=1.0]

\begin{scope}
\begin{knot}[
	clip width=5,
	clip radius = 2pt,
	end tolerance = 1pt,
]
\strand (0,0.5)--(0,-3.5);
\strand (0.75,0.5)--(0.75,-3.5);
\strand (1.25,0.5)--(1.25,-3.5);

\strand (2,0.5)--(2,-3.5);
\strand (3,0.5)--(3,-3.5);
\strand (4,0.5)--(4,-3.5);




\strand[blue, thick,rounded corners](0.875,-0.5)--(0.875,-0.75)--(0.625,-0.75)--(0.625,-0.5)--cycle;
\strand[blue, thick,rounded corners](1.375,-0.5)--(1.375,-0.75)--(1.125,-0.75)--(1.125,-0.5)--cycle;

\strand[red, thick] (0.25,-0.5) to [out=right, in=right] (0.25,-0.75)--(-0.25,-0.75) to [out=left, in=left] (-0.25,-0.5)--cycle;

\strand[red, thick] (0.25,-1.25) to [out=right, in=right] (0.25,-1.5)--(-0.25,-1.5) to [out=left, in=left] (-0.25,-1.25)--cycle;

\strand[red, thick] (1.45,-1.25) to [out=right, in=right] (1.45,-1.5)--(0.55,-1.5) to [out=left, in=left] (0.55,-1.25)--cycle;
\strand[red, thick] (2.25,-0.95) to [out=right, in=right] (2.25,-1.2)--(1.75,-1.2) to [out=left, in=left] (1.75,-0.95)--cycle;
\strand[red, thick] (3.25,-0.95) to [out=right, in=right] (3.25,-1.2)--(2.75,-1.2) to [out=left, in=left] (2.75,-0.95)--cycle;
\strand[red, thick] (4.25,-0.95) to [out=right, in=right] (4.25,-1.2)--(3.75,-1.2) to [out=left, in=left] (3.75,-0.95)--cycle;
\strand[red, thick] (3.25,-2) to [out=right, in=right] (3.25,-2.25)--(-0.25,-2.25) to [out=left, in=left] (-0.25,-2)--cycle;

\strand[red, thick] (4.25,-2.75) to [out=right, in=right] (4.25,-3)--(-0.25,-3) to [out=left, in=left] (-0.25,-2.75)--cycle;

\flipcrossings{1,3,5,7,9,11,13,15,17,19,21,23,25,27,29,31,33,35,37,39}
\end{knot}
\filldraw (0,0) circle (1.5pt);
\filldraw (0.75,0) circle (1.5pt);
\filldraw (1.25,0) circle (1.5pt);
\filldraw (2,0) circle (1.5pt);
\filldraw (3,0) circle (1.5pt);
\filldraw (4,0) circle (1.5pt);
\draw (0.5,-0.25) node {$\beta_1$};
\draw (1.5,-0.25) node {$\beta_2$};

\draw (-0.25,-0.5) node[left] {$\widetilde{\alpha}_1$};

\draw (1.25,-1.75) node[right] {$\widetilde{\alpha}_2$};
\draw (2.25,-0.75) node[right] {$\widetilde{\alpha}_3$};
\draw (3.25,-0.75) node[right] {$\widetilde{\alpha}_4$};
\draw (4.25,-0.75) node[right] {$\widetilde{\alpha}_5$};
\draw (3.35,-2) node[right] {$\widetilde{\gamma}_4$};
\draw (4.35,-2.75) node[right] {$\widetilde{\gamma}_5$};

\draw[->] (4.5,-1.75)--(5.25,-1.75);
\end{scope}

\begin{scope}[shift={(6,0)}]
\begin{knot}[
	clip width=5,
	clip radius = 2pt,
	end tolerance = 1pt,
]
\strand (0,1)--(0,-4);
\strand (0.75,1)--(0.75,-4);
\strand (1.25,1)--(1.25,-4);

\strand (2,1)--(2,-4);
\strand (3,1)--(3,-4);
\strand (4,1)--(4,-4);

\strand[blue, thick,rounded corners](0.875,0)--(0.875,-0.25)--(0.625,-0.25)--(0.625,-0)--cycle;
\strand[blue, thick,rounded corners](1.375,-0)--(1.375,-0.25)--(1.125,-0.25)--(1.125,-0)--cycle;

\strand[red, thick] (4.25,-0.5) to [out=right, in=right] (4.25,-0.75)--(0.5,-0.75) to [out=left, in=left] (0.5,-0.5)--cycle;

\strand[red, thick] (4.25,-1.25) to [out=right, in=right] (4.25,-1.5)--(-0.25,-1.5) to [out=left, in=left] (-0.25,-1.25)--cycle;

\strand[red, thick] (3.25,-2) to [out=right, in=right] (3.25,-2.25)--(-0.25,-2.25) to [out=left, in=left] (-0.25,-2)--cycle;
\strand[red, thick] (3.25,-2.75) to [out=right, in=right] (3.25,-3)--(-0.25,-3) to [out=left, in=left] (-0.25,-2.75)--cycle;

\strand[red, thick] (2.25,-3.5) to [out=right, in=right] (2.25,-3.75)--(-0.25,-3.75) to [out=left, in=left] (-0.25,-3.5)--cycle;

\flipcrossings{9,21,11,23,33,43,51,1,53,3,37,47,5,17,29,49,7,19,31,41}
\end{knot}
\filldraw (0,0.5) circle (1.5pt);
\filldraw (0.75,0.5) circle (1.5pt);
\filldraw (1.25,0.5) circle (1.5pt);
\filldraw (2,0.5) circle (1.5pt);
\filldraw (3,0.5) circle (1.5pt);
\filldraw (4,0.5) circle (1.5pt);
\draw (0.5,0.25) node {$\beta_1$};
\draw (1.5,0.25) node {$\beta_2$};

\draw (4.35,-0.5) node[right] {$\widetilde{x}$};
\draw (4.35,-1.25) node[right] {$\widetilde{y}$};
\draw (3.35,-2) node[right] {$\widetilde{z}$};
\draw (3.35,-2.75) node[right] {$\widetilde{w}$};
\draw (2.35,-3.5) node[right] {$\widetilde{\gamma}_3$};
\end{scope}
\end{tikzpicture}
\end{center}
\caption{A PALF on $P$-resolution $Y$.}
\label{Partial-Y}
\end{figure}
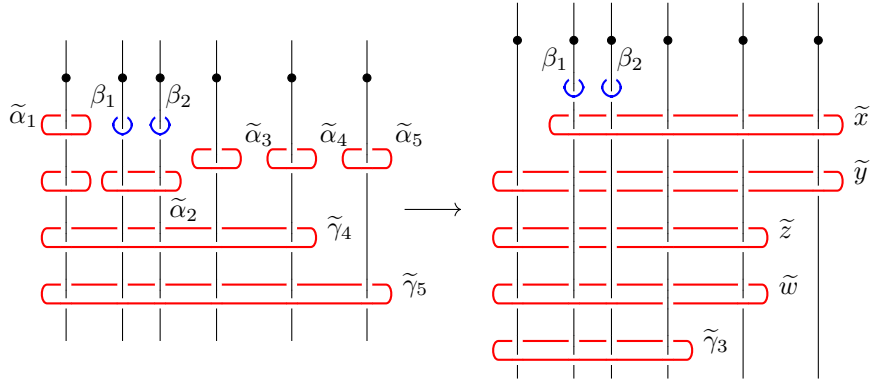

\end{example}

\subsection{Case 2}
In this subsection, we address a $P$-resolution $Y$ of $X$ such that any maximal linear subgraph $\Gamma_L$ of the minimal resolution of $Y$ cannot contain all singularities of class $T$ lying in $Y$. If the genus of generic fiber is $0$, then such a $P$-resolution $Y$ contains one of the subgraphs $\Gamma_i$ shown in Figure ~\ref{FNB}, which will be discussed in \ref{case2genus0}.
If the genus is $1$, then there are two such $P$-resolutions, which will be discussed in \ref{case2genus1}. See ~\cite{HJS} for the list of all $P$-resolutions for non-cyclic singularities.
\subsubsection{Genus-$0$ cases}
\label{case2genus0}
\begin{figure}[htb]
\begin{center}
\begin{tikzpicture}[scale=0.75]
\begin{scope}
\draw (0,0)--(2,0) (1,0)--(1,2);
\node [draw, fill=white, shape=rectangle, anchor=center] at (0,0) {};
\node [draw, fill=white, shape=rectangle, anchor=center] at (1,0) {};
\node [draw, fill=white, shape=rectangle, anchor=center] at (2,0) {};
\node [draw, fill=white, shape=rectangle, anchor=center] at (1,2) {};
 \draw (0,2) node {$\Gamma_1$};
 \filldraw (1,1) circle (2pt);

\draw (1,2) node[right] {$-4$};
\draw (1,1) node[right] {$-1$};
\draw (0,-0.15) node[below] {$-2$};
\draw (1,-0.15) node[below] {$-5$};
\draw (2,-0.15) node[below] {$-3$};
\end{scope}

\begin{scope}[shift={(4,0)}]
\draw (0,2) node {$\Gamma_2$};
\draw (0,0)--(2,0) (1,0)--(1,3);
\node [draw, fill=white, shape=rectangle, anchor=center] at (0,0) {};
\node [draw, fill=white, shape=rectangle, anchor=center] at (1,0) {};
\node [draw, fill=white, shape=rectangle, anchor=center] at (2,0) {};
 \node [draw, fill=white, shape=rectangle, anchor=center] at (1,2) {};
 \node [draw, fill=white, shape=rectangle, anchor=center] at (1,3) {};
 
 \filldraw (1,1) circle (2pt);

\draw (1,3) node[right] {$-5$};
\draw (1,2) node[right] {$-2$};
\draw (1,1) node[right] {$-1$};

\draw (0,-0.15) node[below] {$-2$};
\draw (1,-0.15) node[below] {$-5$};
\draw (2,-0.15) node[below] {$-3$};
\end{scope}

\begin{scope}[shift={(8,0)}]
\draw (0,2) node {$\Gamma_3$};
\draw (0,0)--(2,0) (1,0)--(1,3);
\node [draw, fill=white, shape=rectangle, anchor=center] at (0,0) {};
\node [draw, fill=white, shape=rectangle, anchor=center] at (1,0) {};
\node [draw, fill=white, shape=rectangle, anchor=center] at (2,0) {};
 \node [draw, fill=white, shape=rectangle, anchor=center] at (1,2) {};
 \node [draw, fill=white, shape=rectangle, anchor=center] at (1,3) {};
 
 \filldraw (1,1) circle (2pt);

\draw (1,3) node[right] {$-3$};
\draw (1,2) node[right] {$-3$};
\draw (1,1) node[right] {$-1$};

\draw (0,-0.15) node[below] {$-2$};
\draw (1,-0.15) node[below] {$-5$};
\draw (2,-0.15) node[below] {$-3$};

\end{scope}

\begin{scope}[shift={(0.5,-3.5)}]
\draw (0,2) node {$\Gamma_4$};
\draw (0,0)--(3,0) (1,0)--(1,2);
\node [draw, fill=white, shape=rectangle, anchor=center] at (0,0) {};
\node [draw, fill=white, shape=rectangle, anchor=center] at (1,0) {};
\node [draw, fill=white, shape=rectangle, anchor=center] at (3,0) {};
\node [draw, fill=white, shape=rectangle, anchor=center] at (1,2) {};

 \filldraw (2,0) circle (2pt);
 \filldraw (1,1) circle (2pt);

\draw (1,2) node[right] {$-4$};
\draw (1,1) node[right] {$-1$};
\draw (0,-0.15) node[below] {$-2$};
\draw (1,-0.15) node[below] {$-5$};
\draw (3,-0.15) node[below] {$-4$};
\draw (2,-0.15) node[below] {$-1$};

\end{scope}

\begin{scope}[shift={(5.5,-3.5)}]
\draw (0,2) node {$\Gamma_5$};
\draw (0,0)--(4,0) (1,0)--(1,2);
\node [draw, fill=white, shape=rectangle, anchor=center] at (0,0) {};
\node [draw, fill=white, shape=rectangle, anchor=center] at (1,0) {};
 \filldraw (2,0) circle (2pt);
 \filldraw (1,1) circle (2pt);
\node [draw, fill=white, shape=rectangle, anchor=center] at (3,0) {};
 \node [draw, fill=white, shape=rectangle, anchor=center] at (4,0) {};
 \node [draw, fill=white, shape=rectangle, anchor=center] at (1,2) {};
   
\draw (1,2) node[right] {$-4$};
\draw (1,1) node[right] {$-1$};
\draw (0,-0.15) node[below] {$-2$};
\draw (1,-0.15) node[below] {$-5$};
\draw (2,-0.15) node[below] {$-1$};
\draw (3,-0.15) node[below] {$-3$};
\draw (4,-0.15) node[below] {$-3$};

\end{scope}
\begin{scope}[shift={(5.5,-7)}]
\draw (0,0)--(4,0) (1,0)--(1,2);
\node [draw, fill=white, shape=rectangle, anchor=center] at (0,0) {};
\node [draw, fill=white, shape=rectangle, anchor=center] at (1,0) {};
\node [draw, fill=white, shape=rectangle, anchor=center] at (2,0) {};
 \node [draw, fill=white, shape=rectangle, anchor=center] at (1,2) {};
 \node [draw, fill=white, shape=rectangle, anchor=center] at (4,0) {};
 
 \filldraw (1,1) circle (2pt);
 \filldraw (3,0) circle (2pt);

\draw (1,2) node[right] {$-4$};
\draw (1,1) node[right] {$-1$};

\draw (0,-0.15) node[below] {$-2$};
\draw (1,-0.15) node[below] {$-5$};
\draw (2,-0.15) node[below] {$-3$};
\draw (3,-0.15) node[below] {$-1$};
\draw (4,-0.15) node[below] {$-4$};
\draw (0,2) node {$\Gamma_7$};
\end{scope}

\begin{scope}[shift={(0.5,-7)}]
\draw (0,2) node {$\Gamma_6$};
\draw (0,0)--(3,0) (1,0)--(1,2);
\node [draw, fill=white, shape=rectangle, anchor=center] at (0,0) {};
\node [draw, fill=white, shape=rectangle, anchor=center] at (1,0) {};
\node [draw, fill=white, shape=rectangle, anchor=center] at (3,0) {};
\node [draw, fill=white, shape=rectangle, anchor=center] at (2,0) {};
\node [draw, fill=white, shape=rectangle, anchor=center] at (1,2) {};

 \filldraw (1,1) circle (2pt);

\draw (1,2) node[right] {$-4$};
\draw (1,1) node[right] {$-1$};
\draw (0,-0.15) node[below] {$-2$};
\draw (1,-0.15) node[below] {$-6$};
\draw (3,-0.15) node[below] {$-3$};
\draw (2,-0.15) node[below] {$-2$};

\end{scope}

\end{tikzpicture}

\end{center}
\caption{Types of subgraph $\Gamma_i$.}
\label{FNB}
\end{figure}
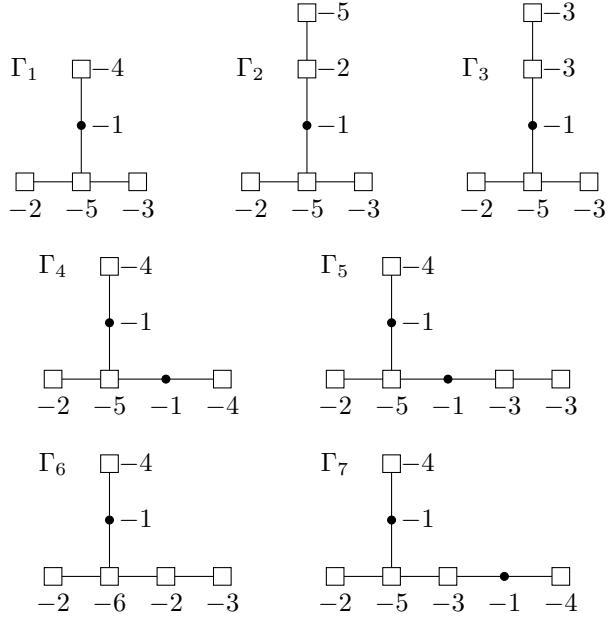

Note that each subgraph $\Gamma_i$ in Figure~\ref{FNB} represents a $P$-resolution $Y_i$ of another quotient surface singularity, say $X_i$. Since the subgraphs in Figure~\ref{FNB} contain all singularities of class $T$ in $Y$, it suffices to find an explicit PALF on $Y_i$. 
Recall that the minimal symplectic filling of $X_i$ corresponding to the $P$-resolution $Y_i$ can be obtained as follows. 
First, we rationally blow down all singularities of class $T$ lying in $Y_i$, except the one containing a central vertex. This yields a $4$-manifold diffeomorphic to the minimal resolution of $X_i$ that can be also obtained from $X_{\Gamma_i}$ by blowing down all $(-1)$-spheres until there is no $(-1)$-sphere in the resulting plumbing graph. Since the blow-ups and blow-downs can be performed in symplectic category, the above argument implies that there is a convex plumbing of symplectic submanifolds of codimension $0$ in the minimal resolution of $X_i$ according to the dual graph of singularities of class $T$ containing the central vertex of $\Gamma_i$. The desired minimal symplectic filling of $X_i$ is obtained by rationally blowing down the convex plumbing. 
Hence, in order to obtain a PALF on $Y_i$, we only need to find a subword representing the convex plumbing from the monodormy factorization of the minimal resolution of $X_i$. 
This is always possible because we know explicitly how vanishing cycles (i.e., $2$-handles) in the PALF on the minimal resolution of $X_i$ correspond to vertices (i.e., embedded 2-spheres) in the minimal resolution graph of $X_i$.

\begin{figure}[htb]
\begin{center}
\begin{tikzpicture}[scale=0.9]
\begin{scope}
\draw (0,0)--(2,0) (1,0)--(1,2);
\node [draw, fill=white, shape=rectangle, anchor=center] at (0,0) {};
\node [draw, fill=white, shape=rectangle, anchor=center] at (1,0) {};
\node [draw, fill=white, shape=rectangle, anchor=center] at (2,0) {};
\node [draw, fill=white, shape=rectangle, anchor=center] at (1,2) {};
 \draw (0,2) node {$\Gamma_1$};
 \filldraw (1,1) circle (2pt);

\draw (1,2) node[right] {$-4$};
\draw (1,1) node[right] {$-1$};
\draw (0,-0.15) node[below] {$-2$};
\draw (1,-0.15) node[below] {$-5$};
\draw (2,-0.15) node[below] {$-3$};
\draw[->] (3.5,0.5)--node[above] {$-4$ curve}(4.5,0.5);
\draw (4,1.2) node {rationally blowing down};
\end{scope}

\begin{scope}[shift={(6,0)}]
\draw (0,0)--(2,0) (1,0)--(1,1);

 \filldraw (1,1) circle (2pt);
 \filldraw (0,0) circle (2pt);
 \filldraw (1,0) circle (2pt);
 \filldraw (2,0) circle (2pt);
 
\draw (0,2) node {$X_1$};

\draw (1,1) node[right] {$-3$};

\draw (0,-0.15) node[below] {$-2$};
\draw (1,-0.15) node[below] {$-4$};
\draw (2,-0.15) node[below] {$-3$};
\end{scope}

\end{tikzpicture}
\begin{tikzpicture}[scale=0.7]
\begin{scope}
\begin{knot}[
	clip width=5,
	clip radius = 2pt,
	end tolerance = 1pt,
]

\draw (2,6) node {};
\strand (0,0)--(0,5);
\strand (1,0)--(1,5);
\strand (2,0)--(2,5);
\strand (3,0)--(3,5);
\strand (4,0)--(4,5);
\strand[red,thick] (-0.25,4.125)--(0.25,4.125) to [out=right, in=right] (0.25,3.975)--(-0.25,3.975) to [out=left, in=left] (-0.25,4.125);
\strand[red,thick] (-0.25,3.125)--(0.25,3.125) to [out=right, in=right] (0.25,2.975)--(-0.25,2.975) to [out=left, in=left] (-0.25,3.125);

\strand[red,thick] (0.75,3.625)--(1.25,3.625) to [out=right, in=right] (1.25,3.475)--(0.75,3.475) to [out=left, in=left] (0.75,3.625);
\strand[red,thick] (1.75,3.625)--(2.25,3.625) to [out=right, in=right] (2.25,3.475)--(1.75,3.475) to [out=left, in=left] (1.75,3.625);
\strand[red,thick] (2.75,3.625)--(3.25,3.625) to [out=right, in=right] (3.25,3.475)--(2.75,3.475) to [out=left, in=left] (2.75,3.625);
\strand[red,thick] (3.75,3.625)--(4.25,3.625) to [out=right, in=right] (4.25,3.475)--(3.75,3.475) to [out=left, in=left] (3.75,3.625);

\strand (0.75,2.625)--(2.25,2.625) to [out=right, in=right] (2.25,2.475)--(0.75,2.475) to [out=left, in=left] (0.75,2.625);

\strand[red,thick] (-0.25,1.625)--(3.25,1.625) to [out=right, in=right] (3.25,1.475)--(-0.25,1.475) to [out=left, in=left] (-0.25,1.625);

\strand[red,thick] (-0.25,0.625)--(4.25,0.625) to [out=right, in=right] (4.25,0.475)--(-0.25,0.475) to [out=left, in=left] (-0.25,0.625);

\flipcrossings{2,4,10,18,26,32,12,20,6,14,22,28,8,16,24,30,34}
\end{knot}
\filldraw (0,4.5) circle (1.5pt);
\filldraw (1,4.5) circle (1.5pt);
\filldraw (2,4.5) circle (1.5pt);
\filldraw (3,4.5) circle (1.5pt);
\filldraw (4,4.5) circle (1.5pt);

\draw (-0.25,4) node[left] {$\alpha_1$};
\draw (-0.25,1.5) node[left] {$\gamma_4$};

\draw (-0.25,0.5) node[left] {$\gamma_5$};
\draw (0.75,2.5) node[left] {$\beta$};
\draw (1.5,3.8) node {$\alpha_2$};
\draw (2.5,3.8) node {$\alpha_3$};
\draw (3.5,3.8) node {$\alpha_4$};
\draw (4.5,3.8) node {$\alpha_5$};
\draw[->] (4.5,2.5)--(5.5,2.5);
\end{scope}
\begin{scope}[shift={(6.25,0.5)}]
\begin{knot}[
	clip width=5,
	clip radius = 2pt,
	end tolerance = 1pt,
]

\draw (2,6) node {};
\strand (0,-1)--(0,5);
\strand (1,-1)--(1,5);
\strand (2,-1)--(2,5);
\strand (3,-1)--(3,5);
\strand (4,-1)--(4,5);
\strand (0.75,4.125)--(2.25,4.125) to [out=right, in=right] (2.25,3.975)--(0.75,3.975) to [out=left, in=left] (0.75,4.125);

\strand[red,thick] (0.75,3.625)--(4.25,3.625) to [out=right, in=right] (4.25,3.475)--(0.75,3.475) to [out=left, in=left] (0.75,3.625);
\strand[red,thick] (-0.25,2.625)--(4.25,2.625) to [out=right, in=right] (4.25,2.475)--(-0.25,2.475) to [out=left, in=left] (-0.25,2.625);
\strand[red,thick] (-0.25,1.625)--(3.25,1.625) to [out=right, in=right] (3.25,1.475)--(-0.25,1.475) to [out=left, in=left] (-0.25,1.625);
\strand[red,thick] (-0.25,0.625)--(3.25,0.625) to [out=right, in=right] (3.25,0.475)--(-0.25,0.475) to [out=left, in=left] (-0.25,0.625);
\strand[red,thick] (-0.25,-0.375)--(2.25,-0.375) to [out=right, in=right] (2.25,-0.525)--(-0.25,-0.525) to [out=left, in=left] (-0.25,-0.375);

\flipcrossings{10,22,12,24,34,42,2,44,4,28,38,6,18,40,8,20,32}
\end{knot}

\draw (2.25,4) node[right] {$\beta$};
\draw (4.25,3.5) node[right] {$x$};
\draw (4.25,2.5) node[right] {$y$};
\draw (3.25,1.5) node[right] {$z$};
\draw (3.25,0.5) node[right] {$w$};
\draw (2.25,-0.5) node[right] {$\gamma_3$};

\filldraw (0,4.5) circle (1.5pt);
\filldraw (1,4.5) circle (1.5pt);
\filldraw (2,4.5) circle (1.5pt);
\filldraw (3,4.5) circle (1.5pt);
\filldraw (4,4.5) circle (1.5pt);

\end{scope}
\end{tikzpicture}

\end{center}
\caption{A PALF on $\Gamma_1$.}
\label{find}
\end{figure}
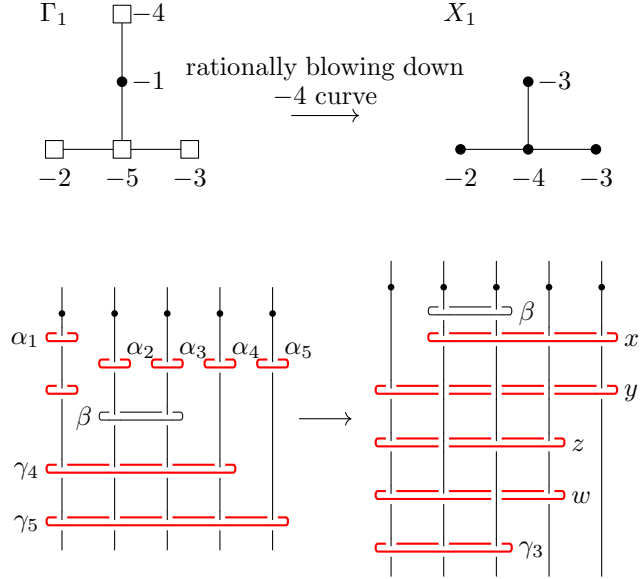

\begin{example}
Figure \ref{find} shows a PALF on the minimal resolution of $X_1$ whose monodromy factorization is given by 
$$\alpha_1^2\alpha_2\alpha_3\alpha_4\alpha_5\beta\gamma_4
  \gamma_5.$$  
Note that the monodromy factorization without $\beta$ above 
represents a $4$-manifold diffeomorphic to a convex plumbing of a subgraph
$$\begin{tikzpicture}
\draw(0,0)--(2,0);
\filldraw(0,0) circle (2pt);
\filldraw(1,0) circle (2pt);
\filldraw(2,0) circle (2pt);
\draw (0,0) node[above] {$-2$};
\draw (1,0) node[above] {$-5$};
\draw (2,0) node[above] {$-3$};
\end{tikzpicture}$$
lying in $\Gamma_1$. Hence, the right-hand side of Figure~\ref{find} above yields a desired PALF 
$\beta xyz w\gamma_3$ on $Y_1$.
\end{example}

\subsubsection{Genus-$1$ cases} 
\label{case2genus1}
There are two $P$-resolutions corresponding to genus $1$ in Case 2, which come from an icosahedral singularity of type $I_{30(2-2)+29}$ and an octahedral singularity of type $O_{12(2-2)+11}$. See Figure~\ref{Two-except}.

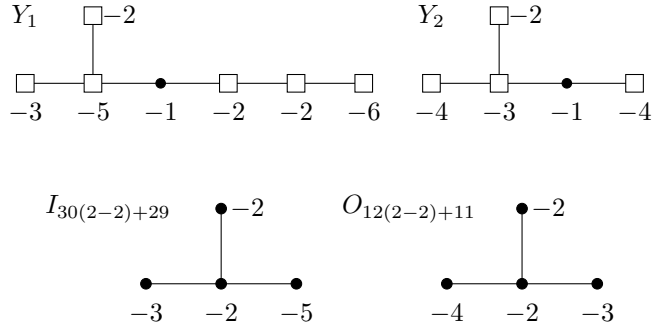
\begin{figure}[hbp]
\begin{tikzpicture}[scale=0.9]
\begin{scope}
\draw (0,1) node {$Y_1$};
\draw (0,0)--(5,0) (1,0)--(1,1);
\node [draw, fill=white, shape=rectangle, anchor=center] at (0,0) {};
\node [draw, fill=white, shape=rectangle, anchor=center] at (1,0) {};
\node [draw, fill=white, shape=rectangle, anchor=center] at (3,0) {};
\node [draw, fill=white, shape=rectangle, anchor=center] at (4,0) {};
\node [draw, fill=white, shape=rectangle, anchor=center] at (5,0) {};
\node [draw, fill=white, shape=rectangle, anchor=center] at (1,1) {};
\filldraw (2,0) circle (2pt);
\draw (1,1) node[right] {$-2$};
\draw (0,-0.15) node[below] {$-3$};
\draw (1,-0.15) node[below] {$-5$};
\draw (2,-0.15) node[below] {$-1$};
\draw (3,-0.15) node[below] {$-2$};
\draw (4,-0.15) node[below] {$-2$};
\draw (5,-0.15) node[below] {$-6$};
\end{scope}
\begin{scope}[shift={(6,0)}]
\draw (0,1) node {$Y_2$};
\draw (0,0)--(3,0) (1,0)--(1,1);
\node [draw, fill=white, shape=rectangle, anchor=center] at (0,0) {};
\node [draw, fill=white, shape=rectangle, anchor=center] at (1,0) {};
\node [draw, fill=white, shape=rectangle, anchor=center] at (3,0) {};
\node [draw, fill=white, shape=rectangle, anchor=center] at (1,1) {};
\filldraw (2,0) circle (2pt);
\draw (1,1) node[right] {$-2$};
\draw (0,-0.15) node[below] {$-4$};
\draw (1,-0.15) node[below] {$-3$};
\draw (2,-0.15) node[below] {$-1$};
\draw (3,-0.15) node[below] {$-4$};
\end{scope}
\end{tikzpicture}

\vspace{2 em}

\begin{tikzpicture}
\begin{scope}
\draw (0,0)--(2,0) (1,0)--(1,1);
\filldraw (0,0) circle (2pt);
\filldraw (1,0) circle (2pt);
\filldraw (2,0) circle (2pt);
\filldraw (1,1) circle (2pt);
\draw (-0.5,1) node {$I_{30(2-2)+29}$};
\draw (1,1) node[right] {$-2$};
\draw (0,-0.15) node[below] {$-3$};
\draw (1,-0.15) node[below] {$-2$};
\draw (2,-0.15) node[below] {$-5$};

\end{scope}
\begin{scope}[shift={(4,0)}]
\draw (0,0)--(2,0) (1,0)--(1,1);
\filldraw (0,0) circle (2pt);
\filldraw (1,0) circle (2pt);
\filldraw (2,0) circle (2pt);
\filldraw (1,1) circle (2pt);
\draw (-0.5,1) node {$O_{12(2-2)+11}$};
\draw (1,1) node[right] {$-2$};
\draw (0,-0.15) node[below] {$-4$};
\draw (1,-0.15) node[below] {$-2$};
\draw (2,-0.15) node[below] {$-3$};
\end{scope}
\end{tikzpicture}
\caption{Two genus-$1$ cases.}
\label{Two-except}
\end{figure}

As shown in \ref{case2genus0}, we first rationally blow down all singularities of class $T$, except the one containing a central vertex such that the resulting $4$-manifold is diffeomorphic to the minimal resolution.
Since there is a bad vertex in the minimal resolution graph, we must consider a genus-$1$ PALF on this minimal resolution graph.

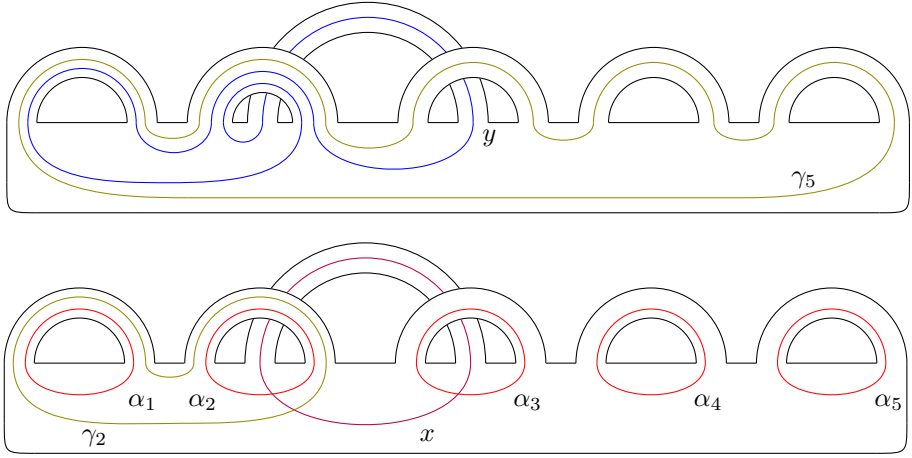
\begin{figure}[htbp]
\begin{tikzpicture}[scale=0.4]

\draw (0,0)--(-1,0);
\draw (-1,0) arc (0:180:2.5 and 2.5);
\draw (-2,0) arc (0:180:1.5 and 1.5);
\draw (-2,0)--(-5,0);

\draw(12,0)--(13,0);
\draw (13+5,0) arc (0:180:2.5 and 2.5);
\draw (13+4,0) arc (0:180:1.5 and 1.5);
\draw (14,0)--(14+3,0);

\draw(18,0)--(19,0);
\draw (19+5,0) arc (0:180:2.5 and 2.5);
\draw (19+4,0) arc (0:180:1.5 and 1.5);
\draw (20,0)--(20+3,0);

\draw (10,0) arc (0:180:4 and 4);
\draw (9,0) arc (0:180:3 and 3);
\draw[blue] (9.5,0) arc (0:180:3.5 and 3.5);
\filldraw[fill=white,white] (0,0)--(1.5,0) (1.5,0) arc (180:0:1 and 1) (3.5,0)--(5,0) (5,0) arc (0:180:2.5 and 2.5);
\filldraw[fill=white,white] (7,0)--(1+7,0) (1+7,0) arc (180:0:1.5 and 1.5) (4+7,0)--(5+7,0) (5+7,0) arc (0:180:2.5 and 2.5);
\draw (1.5,0)--(2,0) (3,0)--(3.5,0) (5,0)--(7,0) (8,0)--(9,0) (10,0)--(11,0);
\draw (5,0) arc (0:180:2.5 and 2.5);
\draw (3.5,0) arc (0:180:1 and 1);
\draw (5+7,0) arc (0:180:2.5 and 2.5);
\draw (4+7,0) arc (0:180:1.5 and 1.5);

\draw[blue] (4.2,0) arc (0:180:1.7 and 1.7);
\draw[blue] (3.8,0) arc (0:180:1.3 and 1.3);
\draw[blue] (2.5,0) arc (0:-180:0.65 and 0.65);
\draw[blue] (9.5,0) to [out=down, in=down] (4.2,0);
\draw[blue] (-1.7,0) arc (0:180:1.8);
\draw[blue] (-5.3,0) to [out=down, in=left] (-0.6,-2) to [out=right, in=down] (3.8,0);
\draw[blue] (-1.7,0) to [out=down, in=left] (-0.45,-1) to [out=right, in=down]  (0.8,0);
\draw  (9.5,-0.5) node[right] {$y$};
\draw[olive] (-1.4,0) arc (0:180: 2.1);
\draw[olive] (-1.4,0) to [out=down, in=down] (0.4,0);
\draw[olive] (4.6,0) arc (0:180: 2.1);
\draw[olive] (4.6,0) to [out=down, in=down] (7.5,0);
\draw[olive] (11.5,0) arc (0:180: 2);
\draw[olive] (11.5,0) to [out=down, in=down] (13.5,0);
\draw[olive] (17.5,0) arc (0:180: 2);
\draw[olive] (17.5,0) to [out=down, in=down] (19.5,0);
\draw[olive] (23.5,0) arc (0:180: 2);
\draw[olive] (-5.6,0) to [out=down, in=left] (0,-2.5);
\draw[olive] (23.5,0) to [out=down, in=right] (18.5,-2.5);
\draw[olive] (0,-2.5)--(18.5,-2.5);
\draw (20.5,-2.5) node[above] {$\gamma_5$};

\draw  (-6,0)--(-6,-2) (-5,-3) -- (23,-3) (24,-2) -- (24,0);
\draw (-6,-2)..controls +(0,-1) and +(-1,0) ..(-5,-3);
\draw (23,-3)..controls +(1,0) and +(0,-1) ..(24,-2);
\
\end{tikzpicture}

\vspace{1 em}

\begin{tikzpicture}[scale=0.4]

\draw (0,0)--(-1,0);
\draw (-1,0) arc (0:180:2.5 and 2.5);
\draw (-2,0) arc (0:180:1.5 and 1.5);
\draw (-2,0)--(-5,0);

\draw(12,0)--(13,0);
\draw (13+5,0) arc (0:180:2.5 and 2.5);

\draw (13+4,0) arc (0:180:1.5 and 1.5);
\draw (14,0)--(14+3,0);

\draw(18,0)--(19,0);
\draw (19+5,0) arc (0:180:2.5 and 2.5);
\draw (19+4,0) arc (0:180:1.5 and 1.5);
\draw (20,0)--(20+3,0);

\draw (10,0) arc (0:180:4 and 4);
\draw (9,0) arc (0:180:3 and 3);
\draw[purple] (9.5,0) arc (0:180:3.5 and 3.5);
\draw[purple] (9.5,0) to [out=down, in=down] (2.5,0);
\filldraw[fill=white,white] (0,0)--(1,0) (1,0) arc (180:0:1.5 and 1.5) (4,0)--(5,0) (5,0) arc (0:180:2.5 and 2.5);
\filldraw[fill=white,white] (7,0)--(1+7,0) (1+7,0) arc (180:0:1.5 and 1.5) (4+7,0)--(5+7,0) (5+7,0) arc (0:180:2.5 and 2.5);
\draw (1,0)--(2,0) (3,0)--(4,0) (5,0)--(7,0) (8,0)--(9,0) (10,0)--(11,0);
\draw (5,0) arc (0:180:2.5 and 2.5);
\draw[olive] (4.7,0) arc (0:180:2.2);
\draw[olive] (-1.3,0) arc (0:180:2.2);
\draw (4,0) arc (0:180:1.5 and 1.5);
\draw (5+7,0) arc (0:180:2.5 and 2.5);
\draw (4+7,0) arc (0:180:1.5 and 1.5);

\draw[olive] (4.7,0) to [out=down,in=right] (0,-2) to [out=left,in=down] (-5.7,0);
\draw[olive] (0.3,0) to [out=down,in=down] (-1.3,0);

\draw  (7.5,-2.4) node[right] {$x$};
\draw  (-3,-2.5) node {$\gamma_2$};
\draw[red] (4.3,0) arc (0:180: 1.8);
\draw[red] (4.3,0) to [out=down,in=down] (0.7,0);
\draw  (0.6,-1.3) node {$\alpha_2$};
\draw[red] (23.3,0) arc (0:180: 1.8);
\draw[red] (23.3,0) to [out=down,in=down] (19.7,0);
\draw  (23.4,-1.3) node {$\alpha_5$};
\draw[red] (-1.7,0) arc (0:180: 1.8);
\draw[red] (-1.7,0) to [out=down,in=down] (-5.3,0);
\draw  (-1.4,-1.3) node {$\alpha_1$};

\draw[red] (17.3,0) arc (0:180: 1.8);
\draw[red] (17.3,0) to [out=down,in=down] (13.7,0);
\draw  (17.4,-1.3) node {$\alpha_4$};

\draw[red] (11.3,0) arc (0:180: 1.8);
\draw[red] (11.3,0) to [out=down,in=down] (7.7,0);
\draw  (11.4,-1.3) node {$\alpha_3$};



\draw  (-6,0)--(-6,-2) (-5,-3) -- (23,-3) (24,-2) -- (24,0);
\draw (-6,-2)..controls +(0,-1) and +(-1,0) ..(-5,-3);
\draw (23,-3)..controls +(1,0) and +(0,-1) ..(24,-2);

\end{tikzpicture}

\caption{A genus $1$-PALF on the minimal resolution of $I_{30(2-2)+29}$.}
\label{PALF-T}
\end{figure}

\emph{$I_{30(2-2)+29}$ case}: Using the same technique yields a monodromy factorization of the following form for the minimal resolution of $I_{30(2-2)+29}$
$$xy\gamma_2^2\alpha_1\alpha_2\alpha_3\alpha_4\alpha_5\gamma_5,$$
where $\alpha_i$ and $\gamma_i$ are curves encircling the $i^{th}$ hole and th first $i$ holes, respectively (refer to~Figure~\ref{PALF-T}). 
Note that, using Hurwitz moves and $y=t_{\alpha_2}(t_{\gamma_2}(x))$, we can change the monodromy factorization as follows:
\begin{eqnarray*}
&\phantom{0}& \hspace{-2 em} xy\gamma_2^2\alpha_1\alpha_2\alpha_3\alpha_4\alpha_5\gamma_5\\ 
&\sim& x\alpha_2\gamma_2 x\alpha_3\alpha_1\alpha_4\alpha_5\gamma_2\gamma_5\\
&\sim&t_x(\alpha_2)t_x(\gamma_2)x^2\alpha_3\alpha_1\alpha_4\alpha_5\gamma_2\gamma_5\\&\sim&t_x(\alpha_2)t_x(\gamma_2)t_x^2(\alpha_3)x^2\alpha_1\alpha_4\alpha_5\gamma_2\gamma_5\\
&\sim&t_x^2(\alpha_3)(t_x^2\cdot t_{\alpha_3}^{-1}\cdot t_x^{-1})(\alpha_2)(t_x^2\cdot t_{\alpha_3}^{-1}\cdot t_x^{-1})(\gamma_2)x^2\alpha_1\alpha_4\alpha_5\gamma_2\gamma_5.
\end{eqnarray*}
Now, taking a global conjugation of each monodromy with $f=t_x\cdot t_{\alpha_3}\cdot t_x^{-2}$ and using a braid relation 
$t_x\cdot t_{\alpha_3}\cdot t_x =t_{\alpha_3}\cdot t_x \cdot t_{\alpha_3}$, we can show that the global monodromy factorization becomes
$$t_x(\alpha_3)\alpha_2\gamma_2\alpha_3^2\alpha_1\alpha_4
\alpha_5f(\gamma_2)\gamma_5.$$

Since the subword $\alpha_2\gamma_2\alpha_3^2\alpha_1\alpha_4\alpha_5\gamma_5$
in the monodromy factorization above corresponds to\begin{tikzpicture}
\draw (0,0)--(2,0);
\filldraw (0,0) circle (2pt);
\filldraw (1,0) circle (2pt);
\filldraw (2,0) circle (2pt);
\draw (0,0) node[above] {$-3$};
\draw (1,0) node[above] {$-5$};
\draw (2,0) node[above] {$-2$};
\end{tikzpicture}
lying in the minimal resolution graph, we obtain a PALF on $Y_1$ by rationally blowing down it. \vspace{0.5 ex}

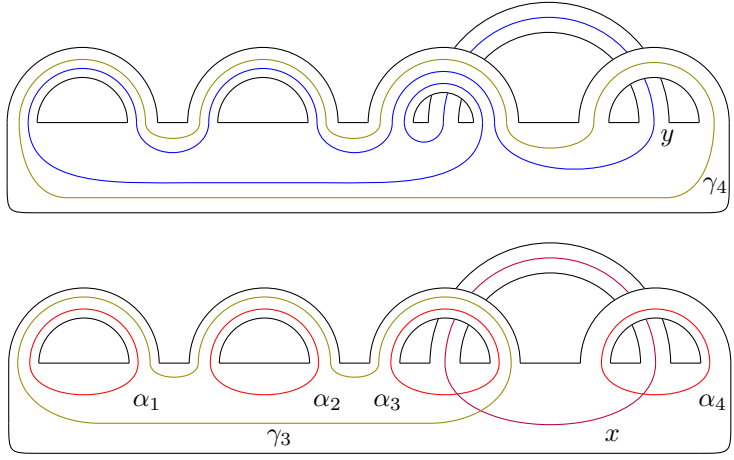
\begin{figure}[htbp]
\begin{tikzpicture}[scale=0.4]

\draw (0,0)--(-1,0);
\draw (-1,0) arc (0:180:2.5 and 2.5);
\draw (-2,0) arc (0:180:1.5 and 1.5);
\draw (-2,0)--(-5,0);

\draw (-6,0)--(-7,0);
\draw (-7,0) arc (0:180:2.5 and 2.5);
\draw (-8,0) arc (0:180:1.5 and 1.5);
\draw (-8,0)--(-11,0);



\draw (10,0) arc (0:180:4 and 4);
\draw (9,0) arc (0:180:3 and 3);
\draw[blue] (9.5,0) arc (0:180:3.5 and 3.5);
\filldraw[fill=white,white] (0,0)--(1.5,0) (1.5,0) arc (180:0:1 and 1) (3.5,0)--(5,0) (5,0) arc (0:180:2.5 and 2.5);
\filldraw[fill=white,white] (7,0)--(1+7,0) (1+7,0) arc (180:0:1.5 and 1.5) (4+7,0)--(5+7,0) (5+7,0) arc (0:180:2.5 and 2.5);
\draw (1.5,0)--(2,0) (3,0)--(3.5,0) (5,0)--(7,0) (8,0)--(9,0) (10,0)--(11,0);
\draw (5,0) arc (0:180:2.5 and 2.5);
\draw (3.5,0) arc (0:180:1 and 1);
\draw (5+7,0) arc (0:180:2.5 and 2.5);
\draw (4+7,0) arc (0:180:1.5 and 1.5);

\draw[blue] (4.2,0) arc (0:180:1.7 and 1.7);
\draw[blue] (3.8,0) arc (0:180:1.3 and 1.3);
\draw[blue] (2.5,0) arc (0:-180:0.65 and 0.65);
\draw[blue] (9.5,0) to [out=down, in=down] (4.2,0);
\draw[blue] (-1.7,0) arc (0:180:1.8);
\draw[blue] (-7.7,0) arc (0:180:1.8);
\draw[blue] (-7.7,0) to [out=down, in=left] (-6.5,-1) to [out=right, in=down]  (-5.3,0);

\draw[blue] (-11.3,0) to [out=down, in=left] (-6,-2)--(-3.75,-2)--(-1.5,-2) to [out=right, in=down](3.8,0);
\draw[blue] (-1.7,0) to [out=down, in=left] (-0.45,-1) to [out=right, in=down]  (0.8,0);
\draw  (9.4,-0.5) node[right] {$y$};
\draw[olive] (-1.4,0) arc (0:180: 2.1);
\draw[olive] (-1.4,0) to [out=down, in=down] (0.4,0);
\draw[olive] (-7.4,0) arc (0:180: 2.1);
\draw[olive] (-7.4,0) to [out=down, in=down] (-5.6,0);
\draw[olive] (-11.6,0) to [out=down, in=left] (-10,-2.5)--(9.9,-2.5) to [out=right, in=down] (11.5,0);
\draw[olive] (4.6,0) arc (0:180: 2.1);
\draw[olive] (4.6,0) to [out=down, in=down] (7.5,0);
\draw[olive] (11.5,0) arc (0:180: 2);
\draw (11.55,-2.7) node [above] {$\gamma_4$};

\draw  (-12,0)--(-12,-2) (-11,-3) -- (11,-3) (12,-2)--(12,0);
\draw (-12,-2)..controls +(0,-1) and +(-1,0) ..(-11,-3);
\draw (11,-3)..controls +(1,0) and +(0,-1) ..(12,-2);

\end{tikzpicture}

\vspace{1 em}

\begin{tikzpicture}[scale=0.4]

\draw (0,0)--(-1,0);
\draw (-1,0) arc (0:180:2.5 and 2.5);
\draw (-2,0) arc (0:180:1.5 and 1.5);
\draw (-2,0)--(-5,0);

\draw (-6,0)--(-7,0);
\draw (-7,0) arc (0:180:2.5 and 2.5);
\draw (-8,0) arc (0:180:1.5 and 1.5);
\draw (-8,0)--(-11,0);



\draw (10,0) arc (0:180:4 and 4);
\draw (9,0) arc (0:180:3 and 3);
\draw[purple] (9.5,0) arc (0:180:3.5 and 3.5);
\draw[purple] (9.5,0) to [out=down, in=down] (2.5,0);
\filldraw[fill=white,white] (0,0)--(1,0) (1,0) arc (180:0:1.5 and 1.5) (4,0)--(5,0) (5,0) arc (0:180:2.5 and 2.5);
\filldraw[fill=white,white] (7,0)--(1+7,0) (1+7,0) arc (180:0:1.5 and 1.5) (4+7,0)--(5+7,0) (5+7,0) arc (0:180:2.5 and 2.5);
\draw (1,0)--(2,0) (3,0)--(4,0) (5,0)--(7,0) (8,0)--(9,0) (10,0)--(11,0);
\draw (5,0) arc (0:180:2.5 and 2.5);
\draw[olive] (4.7,0) arc (0:180:2.2);
\draw[olive] (-1.3,0) arc (0:180:2.2);
\draw (4,0) arc (0:180:1.5 and 1.5);
\draw (5+7,0) arc (0:180:2.5 and 2.5);
\draw (4+7,0) arc (0:180:1.5 and 1.5);

\draw[olive] (4.7,0) to [out=down,in=right] (2,-2)--(-9,-2) to [out=left,in=down] (-11.7,0);
\draw[olive] (0.3,0) to [out=down,in=down] (-1.3,0);
\draw[olive] (-7.3,0) arc (0:180: 2.2);
\draw[olive] (-7.3,0) to [out=down, in=down] (-5.7,0);

\draw  (7.5,-2.4) node[right] {$x$};
\draw  (-3,-2.5) node {$\gamma_3$};
\draw[red] (4.3,0) arc (0:180: 1.8);
\draw[red] (4.3,0) to [out=down,in=down] (0.7,0);
\draw  (0.6,-1.3) node {$\alpha_3$};

\draw[red] (-7.7,0) arc (0:180: 1.8);
\draw[red] (-7.7,0) to [out=down,in=down] (-11.3,0);
\draw  (-7.4,-1.3) node {$\alpha_1$};

\draw[red] (-1.7,0) arc (0:180: 1.8);
\draw[red] (-1.7,0) to [out=down,in=down] (-5.3,0);
\draw  (-1.4,-1.3) node {$\alpha_2$};


\draw[red] (11.3,0) arc (0:180: 1.8);
\draw[red] (11.3,0) to [out=down,in=down] (7.7,0);
\draw  (11.4,-1.3) node {$\alpha_4$};



\draw  (-12,0)--(-12,-2) (-11,-3) -- (11,-3) (12,-2)--(12,0);
\draw (-12,-2)..controls +(0,-1) and +(-1,0) ..(-11,-3);
\draw (11,-3)..controls +(1,0) and +(0,-1) ..(12,-2);

\end{tikzpicture}

\caption{A genus $1$-PALF on the minimal resolution of $O_{12(2-2)+11}$.}
\label{PALF-O}
\end{figure}

\emph{$O_{12(2-2)+11}$ case}: Starting from a PALF on \begin{tikzpicture}
\draw (0,0)--(2,0);
\filldraw (0,0) circle (2pt);
\filldraw (1,0) circle (2pt);
\filldraw (2,0) circle (2pt);
\draw (0,0) node[above] {$-4$};
\draw (1,0) node[above] {$-2$};
\draw (2,0) node[above] {$-3$};
\end{tikzpicture}, we obtain a monodromy factorization for the minimal resolution of
$O_{12(2-2)+11}$ as follows (see Figure~\ref{PALF-O} for vanishing cycles):
$$xy\gamma_3^2\alpha_1\alpha_2\alpha_3\alpha_4\gamma_4$$ 
A similar computation shows that above monodromy factorization is equivalent to
$$t_x(\alpha_4)\alpha_3\gamma_3\alpha_4^2\alpha_1\alpha_2 f(\gamma_3)\gamma_4,$$
where $f=t_x\cdot t_{\alpha_4} \cdot t_x^{-2}$. Now we can construct a PALF on $Y_2$ because the subword $\alpha_3 \gamma_3\alpha_4^2\alpha_1\alpha_2\gamma_4$ corresponds to \begin{tikzpicture}
\draw (0,0)--(2,0);
\filldraw (0,0) circle (2pt);
\filldraw (1,0) circle (2pt);
\filldraw (2,0) circle (2pt);
\draw (0,0) node[above] {$-4$};
\draw (1,0) node[above] {$-3$};
\draw (2,0) node[above] {$-2$};
\end{tikzpicture}.

Hence, summarizing all the arguments in this section, we conclude:

\begin{thm}
There is an explicit algorithm for a genus-$0$ or genus-$1$ PALF on any minimal symplectic filling of the link of non-cyclic quotient surface singularities.
\end{thm}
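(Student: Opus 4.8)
The plan is to assemble the theorem from the pieces developed in Sections~4 and~5, organized around the one-to-one correspondence between minimal symplectic fillings and $P$-resolutions. First I would recall that, by~\cite{PPSU} together with~\cite{KSB}, every minimal symplectic filling of a non-cyclic quotient surface singularity $(X,0)$ is diffeomorphic to a Milnor fiber, and that each such Milnor fiber is obtained topologically by rationally blowing down all singularities of class~$T$ in the minimal resolution $Z$ of the corresponding $P$-resolution $Y$. Hence it is enough to produce, for each $P$-resolution $Y$, an \emph{explicit} PALF on the $4$-manifold obtained from $Z$ by these rational blowdowns, realized as a sequence of monodromy substitutions starting from a PALF on the minimal resolution of $X$. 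The genus ($0$ or $1$) is forced by whether the minimal resolution graph of $X$ contains a bad vertex, and Section~4 already supplies the corresponding genus-$0$ or genus-$1$ PALF on the minimal resolution together with the verification that it induces the Milnor fillable contact structure on the boundary.

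The construction then splits according to the combinatorics of $\Gamma_Z$. In \nameref{case1}, where some maximal linear subgraph $\Gamma_L$ of $\Gamma_Z$ contains all class-$T$ singularities, I would start from the PALF $(F_{X'},y_1\cdots y_m)$ on the minimal resolution of the cyclic singularity $X'$ determined by $\Gamma_L$, extend it across the remaining arm by the technique of Section~4 to get $(F_X, x_1\cdots x_n\widetilde{y}_1\cdots\widetilde{y}_m)$, and import the monodromy substitution $y_1\cdots y_m=z_1\cdots z_l$ realizing the rational blowdown on the cyclic $P$-resolution $Y'$ (Bhupal--Ozbagci~\cite{BOz}); extending the $\widetilde{z}_i$, and positively stabilizing by the vanishing cycles $x_i$ that bound only the new holes when the genus is $0$, yields a PALF on $Y$ whose substitution is still a sequence of rational blowdowns, as recorded in Remark~\ref{remark1}. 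In Case~2, where no such $\Gamma_L$ exists, the genus-$0$ subcase~\ref{case2genus0} is handled through the finite list of subgraphs $\Gamma_1,\dots,\Gamma_7$ of Figure~\ref{FNB}: for each I would exhibit the PALF on the minimal resolution of the associated singularity $X_i$, identify the subword corresponding to the convex plumbing of the class-$T$ configuration containing the central vertex (using the explicit handle-to-vertex dictionary from Section~4), and rationally blow it down. The genus-$1$ subcase~\ref{case2genus1} reduces to the two exceptional $P$-resolutions coming from $I_{30(2-2)+29}$ and $O_{12(2-2)+11}$, where I would start from the genus-$1$ PALF of Section~4 and apply the explicit chain of Hurwitz moves, global conjugations, and braid relations — as displayed for $I_{30(2-2)+29}$ — to expose a linear subword on which the relevant rational blowdown can be performed.

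The main obstacle is Case~2, and within it the two genus-$1$ exceptional singularities. There the global monodromy coming from the genus-$1$ PALF is not manifestly in a form to which a known substitution (lantern, daisy, or the generalized rational-blowdown relation of~\cite{EnMV}) applies; the real work is the mapping-class-group manipulation that rewrites the global monodromy, up to Hurwitz equivalence and conjugation, so that a subword literally equal to the monodromy of the convex plumbing of the correct class-$T$ configuration appears, at which point the substitution of~\cite{EnMV} can be invoked. A secondary point needing care is to confirm that each substitution is genuinely a rational blowdown and not merely a diffeomorphism: in the genus-$0$ part of Case~1 one must keep track of the extra holes and stabilize so that the relation descends from the cyclic one, and in all cases one appeals to Section~4 to guarantee that the boundary contact structure remains Milnor fillable, so that the resulting PALF is a Stein — hence minimal symplectic — filling of the link.
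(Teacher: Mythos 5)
Your proposal is correct and follows essentially the same route as the paper: reduce to $P$-resolutions via the Milnor fiber correspondence, split into the case where a maximal linear subgraph contains all class-$T$ singularities (extending the cyclic PALF of Bhupal--Ozbagci and stabilizing when needed) and the remaining case, which is handled through the finite list of subgraphs $\Gamma_1,\dots,\Gamma_7$ in genus $0$ and the two exceptional singularities $I_{30(2-2)+29}$ and $O_{12(2-2)+11}$ in genus $1$. You also correctly locate the technical crux in the Hurwitz-move and braid-relation manipulations needed to expose the class-$T$ subword in the genus-$1$ exceptional cases, which is exactly where the paper's explicit computations lie.
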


Recall that we divided all $P$-resolutions of $X$ into two families in the construction of PALF on each $P$-resolution $Y$: those with and without a maximal subgraph $\Gamma_L$ containing all singularities of class $T$ in $Y$. The algorithm of PALF for the first family is essentially the same algorithm for cyclic cases, which means that the Milnor fiber corresponding to a $P$-resolution $Y$ is obtained topologically via rational blowdowns from the minimal resolution of $X$~\cite{BOz}. 
On the other hand, we found a subword diffeomorphic to a convex
neighborhood of a linear chain of $2$-spheres in a smooth 4-manifold whose boundary is $L(p^2, pq-1)$ for the second family, which also can be rationally blowdown~\cite{EnMV}. Hence we have:

\begin{cor}
Any Milnor fiber of the link of quotient surface singularities can be obtained, up to diffeomorphism, via a sequence of rational blowdowns from the minimal resolution of the singularity.
\end{cor}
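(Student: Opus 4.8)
The plan is to assemble the corollary from the ingredients already established, reducing everything to rational blowdown surgeries realized by the monodromy substitutions of Sections~4 and~5. By~\cite{KSB} the Milnor fibers of a quotient singularity $X$ are in bijection with its $P$-resolutions, and for any $P$-resolution $Y$ with minimal resolution $Z$ the associated Milnor fiber is, up to diffeomorphism, the $4$-manifold obtained from $Z$ by rationally blowing down all of its class-$T$ configurations; since $Z$ itself is obtained from the minimal resolution of $X$ by further (toric, hence symplectic) blow-ups, it suffices to show that, for every $P$-resolution $Y$, the $4$-manifold $Z$-with-its-class-$T$-configurations-blown-down can be reached from the minimal resolution of $X$ by a finite chain of \emph{rational blowdowns}. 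By~\cite{PPSU} the minimal symplectic fillings of a non-cyclic quotient singularity coincide with its Milnor fibers (and by~\cite{NPo} with those of a lens space in the cyclic case), so throughout I may pass freely between the two notions.

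First I would dispose of the cyclic case: for a cyclic quotient singularity this is exactly the theorem of Bhupal--Ozbagci~\cite{BOz}, which produces each minimal symplectic filling of a lens space from the minimal resolution by a sequence of monodromy substitutions, each interpreted as a rational blowdown. For a non-cyclic $X$ I would run the case division of Section~5. In \textbf{Case 1}, where the dual graph $\Gamma_Z$ has a maximal linear subgraph $\Gamma_L$ carrying every class-$T$ vertex, $\Gamma_L$ is the minimal resolution graph of a $P$-resolution $Y'$ of a cyclic singularity $X'$; extending the cyclic-case substitutions $y_1\cdots y_m = z_1\cdots z_l$ by the arm cycles $x_1,\dots,x_n$ gives a PALF on $Y$, and, as explained in \nameref{case1} and Remark~\ref{remark1}, gluing the symplectic cobordism between the link of $X'$ and the link of $X$ onto a filling built by rational blowdowns again yields one built by rational blowdowns — the only subtlety, when $F_X$ has genus $0$ and the non-linear arm contains a vertex of degree $<-2$, being that one must first positively stabilize by the cycles $x_i$ enclosing the newly created holes, after which the substitution is again a rational blowdown. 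In \textbf{Case 2} the finitely many $P$-resolutions not of this form are enumerated: the genus-$0$ ones contain a subgraph $\Gamma_i$ from Figure~\ref{FNB} and the genus-$1$ ones are the two $P$-resolutions $Y_1,Y_2$ of Figure~\ref{Two-except}; in each of these finitely many cases one exhibits, via the Hurwitz and braid-relation manipulations carried out in~\ref{case2genus0} and~\ref{case2genus1}, a subword of the global monodromy representing a convex plumbing of symplectic $2$-spheres along the dual resolution graph of some $L(p^2,pq-1)$, and applies the (generalized) rational blowdown monodromy substitution of~\cite{EnMV}; since all the $2$-spheres in these configurations are symplectically embedded and intersect positively, Symington's criterion~\cite{Sy} certifies that each substitution is an honest symplectic rational blowdown.

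Composing these — the minimal resolution of $X$, blow down all $(-1)$-spheres symplectically, then perform the rational blowdowns above — produces the Milnor fiber attached to $Y$, and letting $Y$ range over all $P$-resolutions of $X$ (using the $P$-resolution list of~\cite{HJS} to know the enumeration in Case~2 is complete) together with the cyclic case handled by~\cite{BOz} exhausts all Milnor fibers of all quotient surface singularities; the trivial $P$-resolution contributes the minimal resolution itself via the empty sequence. The step that genuinely requires care, and which I expect to be the main obstacle, is the verification in the Case~2 genus-$1$ subcases (and wherever the genus-$0$ fiber has a deep non-linear arm) that the chosen subword is actually realized geometrically by a configuration $C_{p,q}$ of symplectically and positively intersecting spheres inside the constructed PALF — that is, that the relation in the mapping class group is not merely formal but corresponds to the rational blowdown surgery. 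This is handled exactly as in Sections~4 and~5, by tracking how each vanishing cycle sits relative to the holes of the page and invoking the compatibility of the PALF with the plumbing symplectic form.
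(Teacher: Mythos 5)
Your proposal is correct and follows essentially the same route as the paper: the case division into $P$-resolutions whose class-$T$ singularities do or do not lie in a maximal linear subgraph, the reduction of the first family to the cyclic-case substitutions of~\cite{BOz} (with the positive-stabilization caveat and the cobordism observation of Remark~\ref{remark1}), and the treatment of the second family by exhibiting a subword representing a convex linear plumbing bounding $L(p^2,pq-1)$ and applying the substitution of~\cite{EnMV}. The appeal to Symington's criterion is harmless extra since the corollary is only up to diffeomorphism.
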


\medskip


\providecommand{\bysame}{\leavevmode\hbox to3em{\hrulefill}\thinspace}

\end{document}